\documentclass[11pt]{amsart}
\usepackage[T1]{fontenc}
\usepackage{lmodern,amsmath,amssymb,amsthm,mathtools,microtype,needspace}
\usepackage[a4paper,margin=29mm]{geometry}
\usepackage{xcolor}
\usepackage[colorlinks=true,linkcolor=blue!45!black,citecolor=blue!45!black,urlcolor=blue!45!black]{hyperref}
\makeatletter
\@namedef{subjclassname@2020}{\textup{2020} Mathematics Subject Classification}
\makeatother
\newtheorem{theorem}{Theorem}[section]
\newtheorem{proposition}[theorem]{Proposition}
\newtheorem{lemma}[theorem]{Lemma}
\newtheorem{corollary}[theorem]{Corollary}
\theoremstyle{definition}
\newtheorem{example}[theorem]{Example}
\theoremstyle{remark}
\newtheorem{remark}[theorem]{Remark}

\newcommand{\Max}{\operatorname{Max}}
\newcommand{\Spec}{\operatorname{Spec}}
\newcommand{\Jac}{\operatorname{Jac}}
\newcommand{\Frac}{\operatorname{Frac}}
\newcommand{\Char}{\operatorname{char}}
\newcommand{\wgd}{\operatorname{w.gl.dim}}
\newcommand{\gld}{\operatorname{gl.dim}}
\newcommand{\fd}{\operatorname{fd}}
\newcommand{\Tor}{\operatorname{Tor}}
\newcommand{\lex}{\mathrm{lex}}
\newcommand{\Z}{\mathbb Z}
\newcommand{\Q}{\mathbb Q}
\newcommand{\F}{\mathbb F}
\newcommand{\N}{\mathbb N}

\title[Principal quotients of domains]{Von Neumann regular and semisimple rings realized  as principal quotients of domains}
\author{Xiaolei Zhang}
\address{School of Mathematics and Statistics, Tianshui Normal University, Tianshui 741000, China}
\email{zxlrghj@163.com}
\author{Wei Qi}
\address{School of Mathematics and Statistics, Tianshui Normal University, Tianshui 741000, China}
\email{qwlrghj@126.com}
\date{}
\subjclass[2020]{Primary 13A18, 13F05; Secondary 13C11, 13D05}
\keywords{Von Neumann regular ring, semisimple ring, B\'ezout domain, Dedekind domain, principal quotient, residue field}
\hypersetup{pdftitle={Von Neumann regular and semisimple rings realized as principal quotients of domains},pdfauthor={}}

\begin{document}
	
	\begin{abstract}
	In this paper, we show that every commutative von Neumann regular ring is a quotient of a characteristic-zero B\'ezout domain by its principal Jacobson radical.   We then characterize the commutative semisimple rings that occur as quotients of Dedekind domains. For a finite product of fields $R=\prod_{i=1}^n k_i$, such a presentation exists exactly when the cardinals $|k_i|^{\aleph_0}$ are all equal. When this condition holds, the source may be taken to be a characteristic-zero semilocal principal ideal domain, and the quotient ideal may again be its Jacobson radical. 
	\end{abstract}
	
	\maketitle
	
	\section{Introduction}
	
Throughout this paper,	all rings are commutative with identity.	It is well-known that every commutative ring is a quotient of an integral domain by a principal ideal, and the source can even be taken to have characteristic zero \cite[Proposition 1.2]{Costa}.  However, it is a very difficult problem to realize every ring in a certain class of commutative rings  as a quotient ring of some integral domain.	
	The quotient problem for Pr\"ufer domains has a long history. In the terminology for rings with zero divisors, a Pr\"ufer ring is one whose finitely generated regular ideals are invertible. Boisen and Larsen \cite{BoisenLarsen} showed that a homomorphic image of a Pr\"ufer domain is a Pr\"ufer ring, while the converse fails. They also reduced the representation problem to the total quotient ring: a Pr\"ufer ring $A$ is an image of a Pr\"ufer domain if and only if its total quotient ring $Q(A)$ is such an image. Here $Q(A)$ denotes localization at all nonzerodivisors. This reduction concerns the existence of a Pr\"ufer source; it does not prescribe a principal kernel.
	
	Boisen and Sheldon \cite{BoisenSheldon} described the valuation structure of a presentation $A=D/I$ with $D$ Pr\"ufer. Their valuations on the target are understood in the sense of Manis \cite{Manis}. They obtained a bijection between Manis valuation pairs over $A$ and valuation overrings of $D$ centered on primes containing $I$. Under this correspondence, if a target element has finite value, then all of its lifts have the same value at the corresponding source valuation. The target value group is identified with an isolated, or convex, subgroup of the source value group. This extends the familiar prime-ideal correspondence and records how values pass to the quotient. The result applies after a presentation has been given; existence with additional conditions on the source and the kernel is a separate question.
	
	Related work studies the quotients themselves and the valuation intersections behind their presentations. Chuchel and Eggert \cite{ChuchelEggert} analyze the complete rings of quotients of images of semilocal Pr\"ufer domains through local quotients and topological completion. Using an irredundant primary decomposition of the kernel, they also describe the elements of the complete ring of quotients. Doering and Lequain \cite{DoeringLequain} prove a weak approximation theorem for valuations that need not be independent, and they study finite intersections with prescribed divisibility groups and residue fields. In the finite-spectrum construction below, explicit separating elements are enough for the ideal calculation.
	
	The local form of the existence problem is already restrictive. A \emph{chain ring} has its ideals linearly ordered by inclusion, and a chain domain is a valuation domain. Fuchs and Shelah \cite{FuchsShelah} proved in ZFC that some chain rings are not homomorphic images of valuation domains. Their examples use nonstandard divisible uniserial modules. Thus the ordering of the ideals of the target does not by itself provide a valuation-domain source.
	
	A different realization theorem is due to the first author of this paper \cite[Theorem~1.1]{Zhang}. By Bass's characterization \cite[Theorem~P]{Bass}, in the commutative case a perfect ring is a finite product of local rings whose maximal ideals are $T$-nilpotent; $T$-nilpotence means that every sequence in the ideal has a finite initial product equal to zero. Following Bazzoni and Salce \cite{BazzoniSalce}, a domain is \emph{almost perfect} if every nonzero proper quotient is perfect. Zhang shows that every perfect ring $B$ is $A/I$ for a characteristic-zero, one-dimensional semilocal almost perfect domain $A$, with $|\Max A|=|\Max B|$. The ideal $I$ contains a nonzero $g\in\Jac(A)$ for which $A/gA$ is perfect. The source can be chosen with
	\[
	\Spec A=\{(0),\mathfrak n_1,\ldots,\mathfrak n_r\},
	\qquad \Frac(A)=A[1/g].
	\]
	This gives the local realization and characterizes perfect rings as proper quotients of almost perfect domains. It permits mixed residue characteristics and non-nilpotent radicals, and the same source realizes the quotients of $B$ \cite[Sections~8--9]{Zhang}. Here $B$ is a quotient of the perfect principal quotient $A/gA$; the kernel defining $B$ need not be principal.
	
	For principal quotients of B\'ezout domains, several structural and homological properties are known. In particular, Sorokin \cite[Theorems~4--5]{Sorokin} shows that if $A$ is a B\'ezout duo-domain and $a\in A$ is a nonzero nonunit, then $A/aA$ has weak global dimension either zero or infinity; finite weak global dimension is equivalent to von Neumann regularity. These results start with a given principal quotient. Theorem~\ref{mainA} constructs a source for a prescribed absolutely flat target.
	
	Our first result gives a B\'ezout source for every absolutely flat target. Recall that a domain is B\'ezout if every finitely generated ideal is principal. We use the standard ideal theory of B\'ezout and Pr\"ufer domains as presented in \cite{Gilmer,FuchsSalce}.
	
	\begin{theorem}\label{mainA}
		Let $R$ be a commutative von Neumann regular ring. Then there are a characteristic-zero B\'ezout domain $D$ and a nonzero nonunit $t\in D$ such that
		\begin{equation}\label{main-eq}
			D/tD\cong R,\qquad \Jac(D)=tD,\qquad \wgd D=1.
		\end{equation}
		The construction can be made with
		\[
		\Frac(D)=\Q(T_a:a\in R),\qquad |D|=\max\{\aleph_0,|R|\},
		\]
		where the $T_a$ are algebraically independent.
	\end{theorem}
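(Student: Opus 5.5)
The plan is to build $D$ inside $K=\Q(T_a:a\in R)$ as a ``Kronecker-type'' intersection of valuation rings indexed by the points of $X=\Spec R$. Since $R$ is von Neumann regular, $X=\Max R$ is a Boolean space, and $R$ is the ring of locally constant sections of the residue fields $k_x=R/\mathfrak m_x$. The idempotents of $R$ correspond to the clopen sets of $X$.

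\emph{Setting up.} Let $A=\Z[T_a:a\in R]$ and let $\varphi\colon A\to R$ send $T_a\mapsto a$. This is surjective, because $R$ is generated as a ring by its elements. For $x\in X$ put $P_x=\ker(A\to k_x)$, so $A/P_x\cong k_x$ and the residue field of $A_{P_x}$ is $k_x$.

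For each $x$ I would choose a valuation $v_x$ on $K$ with four properties:
\begin{itemize}
\item $v_x$ dominates $A_{P_x}$;
\item its residue field is exactly $k_x$;
\item its value group contains a least positive element;
\item $v_x(t)$ equals that least positive element for one fixed element $t\in P:=\ker\varphi$.
\end{itemize}
To get such valuations I would first pick a transcendence basis of $K$ adapted to $P_x$. I would then take $v_x$ to be a Gauss-type extension of a rank-one discrete valuation, normalised so that a chosen uniformizer lies in every $P_x$.

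The natural candidate for $t$ is a single element of $A$ that is a uniformizer for all $x$ at once. This forces a choice uniform in $x$. I would obtain it by refining $X$ into finitely many clopen pieces on which a fixed finite set of $T_a$ already separates residues, and then using the idempotents of $R$, lifted to $A$, to glue.

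\emph{Defining $D$ and checking \eqref{main-eq}.} Let $S=\varphi^{-1}(U(R))$ and set
\[
D=A_S\cdot\bigl\{f\in K:\ v_x(f)\ge 0\ \text{for all } x,\ \text{with the family } (v_x) \text{ locally constant on clopen pieces}\bigr\}.
\]
The intended structure is that the maximal ideals of $D$ are the centres $\mathfrak M_x$ of the $v_x$, and that $D_{\mathfrak M_x}=V_x$. Granting this, the claims follow in order.
\begin{itemize}
\item $D$ is Pr\"ufer, since all its localizations at maximal ideals are valuation rings.
\item $D$ is B\'ezout. A finitely generated ideal is locally principal, with generators depending only on finitely many $T_a$. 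Compactness of $X$ reduces the problem to finitely many clopen pieces, and the lifted idempotents patch the local generators into one global generator.
\item $D/tD\cong R$. Each $V_x/tV_x=k_x$ because $v_x(t)$ is the least positive value, so $tD$ is the intersection of the $\mathfrak M_x$; this is $\Jac(D)$. The induced map $D/tD\to\prod_x k_x$ then has image the locally constant sections, which is $R$.
\item $\wgd D=1$, since $D$ is Pr\"ufer and not a field.
\item The size and fraction-field statements hold because $\Z[T_a]\subseteq D\subseteq K$.
\end{itemize}

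\emph{Main obstacle.} The hard step is the first: producing valuations $v_x$ with residue field exactly $k_x$ and $t$ of least positive value, uniformly in $x$. The danger is that a naive composite of the $t$-adic valuation with a place of $\Frac(A/tA)$ onto $k_x$ has rank two, and then $V_x/tV_x$ is not a field.

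If the uniform construction resists, I would try two fallbacks. The first is to adjoin a fresh indeterminate $X$ and take $t=X$, using the Gauss extension of a valuation on $\Q(T_a)$ whose residue field is $k_x$, so that $X$ has value $1$ in a group $\Z\oplus\Gamma$ ordered with $\Z$ lexicographically minor. The second is to replace a single $t$ by its image under suitable idempotent lifts, so that it is locally a uniformizer everywhere.
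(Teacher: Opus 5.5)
Your proposal has a genuine gap. It never constructs the valuations $v_x$, and it never proves the uniformity in $x$ on which every global claim rests.

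\emph{The construction of $v_x$.} Your primary route cannot work. A Gauss extension of a discrete valuation to $K=\Q(T_a:a\in R)$ has residue field a rational function field over the base residue field, not $k_x$. More decisively, no rank-one discrete valuation on $K$ can have residue field $k_x$ once $|R|>|k_x|^{\aleph_0}$. Its valuation ring $V_x$ would be a DVR with fraction field $K$, so Lemma~\ref{cardinality} gives $|K|=|V_x|\le|k_x|^{\aleph_0}$. For a concrete failure, take $R=\F_2\times\F_2(X_s:s\in S)$ with $|S|>2^{\aleph_0}$, at the point with residue field $\F_2$. So higher rank is forced, and your first fallback has the right shape.

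The paper uses $t=T_0$ itself as the ``fresh'' variable: since $0$ lies in every maximal ideal, translation sends $T_0$ to $Y_0$. It gives $Y_0$ weight $\gamma_0=(0,1)$, the least positive element of $\Gamma=H\oplus_{\lex}\Z$. In positive residue characteristic the $p$-adic Gauss component is placed lexicographically first in $\Delta=\Z\oplus_{\lex}\Gamma$, so $v_x(t)=\delta$ is still least positive. (Adjoining a genuinely new $X$ would also break $\Frac(D)=\Q(T_a:a\in R)$ when $R$ is finite.) You still need a concrete valuation with residue field exactly $k_x$. The paper translates $T_a\mapsto\bar a+Y_a$ and takes a monomial valuation with lexicographically independent weights, so that $T_a$ has residue $\bar a$.

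\emph{The uniformity in $x$.} This is the more serious problem. Your definition of $D$ is not a well-defined ring; it builds in by fiat what must be proved. The intended structure ($\Max D=\{\mathfrak M_x\}$, $D_{\mathfrak M_x}=V_x$) is then simply granted.

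The missing lemma is Proposition~\ref{finitevalues}: for each $f\in K^\times$, the function $x\mapsto v_x(f)$ has finite image and clopen fibers. It comes from the explicit valuations. Write $f=cP/Q$ with $P,Q$ primitive and translate inside $R[Y_a]$. The leading exponent at $x$ is then decided by which coefficients $A_\alpha(P)\in R$ lie in $x$, which is a clopen condition.

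With this lemma you simply take $D=\bigcap_xV_x$ and derive everything else.
\begin{itemize}
\item The residue tuple of $f\in D$ equals that of the explicit element $c_R\sum_\alpha e^P_\alpha e^Q_\alpha A_\alpha(P)A_\alpha(Q)^\#\in R$. Your claim that the image is ``the locally constant sections, which is $R$'' has no argument behind it without this.
\item $\ker\rho=tD$, because $v_x(t)$ is the least positive value at every point.
\item B\'ezout follows from $b=f+T_{1-e}g$, where $e$ is the idempotent of the clopen set $\{x:v_x(f)\le v_x(g)\}$.
\end{itemize}
Idempotents of $R$ cannot be lifted to idempotents of a domain. What replaces them is an element with positive value on one clopen set and value zero off it. Finally, the maximal ideals and localizations are consequences of these facts (Proposition~\ref{localizations}), not inputs to the argument.
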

	
	The residue fields of $R$ may have different characteristics and arbitrary transcendence degrees. Moreover, $R$ need not be the full product of these fields. The proof uses every point of the Boolean space $\Max R$. At each point, translated monomial valuations realize the required residue field; a preliminary Gauss valuation handles positive residue characteristic in a common characteristic-zero field. For each rational function, its values are constant on the parts of a finite clopen partition. This allows the residue tuples to be glued in $R$ and supplies generators of two-generated ideals. A common least positive value then makes the residue kernel principal.
	
	For semisimple targets, one can ask exactly when the source may be Dedekind. The necessary cardinal estimates belong to a more general theory. Kearnes and Oman \cite[Lemma~2.1]{KearnesOman} proved that a Noetherian domain $A$ which is not a finite field satisfies
	\[
	\max\{\aleph_0,|A/J|\}\le |A|\le |A/J|^{\aleph_0}
	\]
	for every proper ideal $J$. Their Theorem~2.6 realizes admissible finite lists of residue-field \emph{cardinalities} in one-dimensional semilocal Noetherian domains. Our second result concerns specified fields, with their given field structures, and produces a principal ideal domain.
	
	\Needspace{19\baselineskip}
	\begin{theorem}\label{mainB}
		Let $R=\prod_{i=1}^n k_i$ be a commutative semisimple ring, and put $\kappa_i=|k_i|$. The following conditions are equivalent.
		\begin{enumerate}
			\item $R$ is a quotient of a Dedekind domain by an ideal.
			\item There are a characteristic-zero semilocal PID $D$ and a nonzero nonunit $t\in D$ such that
			\[
			D/tD\cong R,\qquad tD=\Jac(D),\qquad |\Max D|=n.
			\]
			\item $\kappa_1^{\aleph_0}=\cdots=\kappa_n^{\aleph_0}$.
			\item $\kappa_j\le\kappa_i^{\aleph_0}$ for all $i,j$.
		\end{enumerate}
		If these conditions hold, put $M=\kappa_i^{\aleph_0}$ and
		$\mu=\max\{\aleph_0,\kappa_1,\ldots,\kappa_n\}$. For every cardinal $\lambda$ with $\mu\le\lambda\le M$, the source in (2) can be chosen so that
		\[
		|D|=\lambda,\qquad \Frac(D)=\Q(U_\alpha:\alpha<\lambda),
		\]
		where the $U_\alpha$ are algebraically independent.
	\end{theorem}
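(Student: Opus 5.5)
We prove the cycle (2)$\Rightarrow$(1)$\Rightarrow$(4)$\Rightarrow$(3)$\Rightarrow$(2), and then handle the cardinality refinement.

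\emph{The easy implications.} The implication (2)$\Rightarrow$(1) is trivial, since a PID is Dedekind.

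For (1)$\Rightarrow$(4), write $R\cong A/J$ with $A$ Dedekind and $J$ a nonzero proper ideal; the case $J=0$ forces $n=1$, where (4) is automatic. Each $k_i$ is a quotient of $A$. Apply the Kearnes--Oman bound \cite[Lemma~2.1]{KearnesOman} to $A$ twice. With the ideal $\mathfrak m_i$, it gives $|A|\le\kappa_i^{\aleph_0}$. With $\mathfrak m_j$, it gives $\kappa_j\le|A|$. Hence $\kappa_j\le\kappa_i^{\aleph_0}$. If $A$ is a finite field, then $n=1$ and there is nothing to prove.

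For (4)$\Rightarrow$(3), use cardinal arithmetic: $\kappa_j^{\aleph_0}\le(\kappa_i^{\aleph_0})^{\aleph_0}=\kappa_i^{\aleph_0}$, and then use symmetry in $i,j$.

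\emph{The construction (3)$\Rightarrow$(2).} Fix $\lambda$ with $\mu\le\lambda\le M$ and set $K=\Q(U_\alpha:\alpha<\lambda)$. For each $i$ we construct a discrete valuation $v_i$ on $K$ with residue field isomorphic to $k_i$ as a field. Then $D=\bigcap_i \mathcal O_{v_i}$ is a semilocal PID with $n$ maximal ideals, provided the $v_i$ are pairwise inequivalent; this is standard, since a finite intersection of DVRs is a semilocal Dedekind domain, hence a PID. By approximation one chooses $t$ with $v_i(t)=1$ for all $i$, so that $tD=\Jac(D)$ and $D/tD\cong\prod_i k_i$ by CRT. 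We also have $|D|=|K|=\lambda$.

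The key step is the existence of a DVR on $K$ with prescribed residue field $k$, where $|k|\le\lambda\le|k|^{\aleph_0}$ and $\lambda\ge\aleph_0$. We proceed as follows.
\begin{enumerate}
\item First choose a complete DVR $W$ of characteristic $0$ with residue field $k$: a Cohen ring if $\Char k=p$, or $k[[x]]$ if $\Char k=0$. Its fraction field $L$ has $|L|=|k|^{\aleph_0}=M\ge\lambda$.
\item Next find a subfield $F\subseteq L$, purely transcendental over $\Q$ of transcendence degree $\lambda$, such that $F\cap W$ has residue field all of $k$ and value group $\Z$. This holds because $W$ is a DVR, and the restriction is discrete when its value group is nontrivial.
\item To get residue field exactly $k$, pick lifts $\tilde a$ of a transcendence basis and of generators of $k$, together with a uniformizer. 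Take $F_0$ to be the field they generate over $\Q$. This has size at most $\max\{\aleph_0,|k|\}\le\lambda$, and its residue field is already $k$.
\item Enlarge $F_0$ by adjoining elements of $W$ algebraically independent over $F_0$, using that $\operatorname{trdeg} L = M\ge\lambda$, until the transcendence degree reaches $\lambda$. Adjoining further elements cannot enlarge the residue field beyond $k$.
\item Finally, $F_0$ need not be purely transcendental over $\Q$. To fix this, instead construct $F$ by choosing algebraically independent elements from the start. The lifts of a transcendence basis of $k$, the uniformizer ($p$ itself in the mixed case, or $x$ otherwise), and, for the algebraic part of $k$, suitable transcendental elements $y$ whose residues generate it, can all be chosen algebraically independent by perturbing lifts by high powers of the uniformizer times independent transcendentals. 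The residues are unchanged, and independence is achieved since $|W|$ has room.
\end{enumerate}
This yields $F\cong K$, and we pull back the valuation along that isomorphism.

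\emph{Arranging the $n$ valuations on a common $K$.} We must make the $n$ valuations inequivalent on the same $K$. Pulling back along independent choices of isomorphisms does not by itself guarantee this, but if $v_i=v_j$ one can twist by an automorphism of $K$ permuting variables. Alternatively, and more simply, we ensure distinct centers by requiring a chosen variable $U_0$ to be a uniformizer for $v_1$ and a unit for the others, adjusting the construction accordingly.

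The main obstacle is the key step of realizing the given residue field purely transcendentally with transcendence degree exactly $\lambda$, especially its algebraic part in positive characteristic. There the lifts must be made algebraically independent while keeping the residues, and one must check via the perturbation trick that no extra residues appear.
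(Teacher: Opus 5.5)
Your cycle of implications, the appeal to Kearnes--Oman for (1)$\Rightarrow$(4), and the overall plan for (3)$\Rightarrow$(2) are sound. However, the step you yourself call the main obstacle is not actually proved.

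As written, the ``perturbation trick'' fixes lifts $a_\alpha$ and replaces them by $a_\alpha+\pi^N z_\alpha$ with ``independent transcendentals'' $z_\alpha$. This gives algebraic independence only if the $z_\alpha$ are independent over the field generated by $\pi$ and the unperturbed lifts. When $|k|=M$ (for instance $k=\mathbb C$, or any $k$ with $|k|^{\aleph_0}=|k|$, which forces $\lambda=M$), that field can have cardinality $M$. Nothing you say then guarantees transcendence degree $\lambda$ to spare.

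``$|W|$ has room'' has to be made precise by two facts you do not supply:
\begin{enumerate}
\item Every ball $a+\pi^rW$ has cardinality exactly $|k|^{\aleph_0}=M$, by digit expansions in the complete DVR.
\item The lifts must be chosen one at a time by transfinite recursion. Take $x_\alpha$ in its prescribed ball outside the set of elements of $\Frac(W)$ algebraic over $\Q(x_\beta:\beta<\alpha)$. That set has cardinality at most $\max\{\aleph_0,|\alpha|\}<M$, because $|\alpha|<\lambda\le M$ and $M\ge 2^{\aleph_0}$.
\end{enumerate}
This is exactly Lemma~\ref{independent-lifts}. Your worry that ``extra residues'' might appear is a non-issue. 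The residue field of a restricted valuation is automatically a subfield of $k$, so you only need residues that generate $k$.

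The distinctness of the $v_i$ is also left open. Twisting by a permutation of the variables is not justified, and a single $U_0$ only separates $v_1$ from the others. You need $n$ variables $Z_1,\dots,Z_n$ whose images are prescribed at every place within the same recursion: $\sigma_i(Z_i)\in\pi_i+\pi_i^2W_i$ and $\sigma_i(Z_j)\in 1+\pi_iW_i$ for $j\ne i$, so that $v_i(Z_j)=\delta_{ij}$. These must be transcendental. So in mixed characteristic $p$ itself cannot serve; it is fine for the value group but cannot separate places.

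With these repairs your route works, and it then differs from the paper only in the final assembly. You obtain the semilocal PID structure, $\Jac(D)=tD$, and surjectivity onto $\prod k_i$ from the standard theorem on finite intersections of distinct DVRs together with CRT. The paper instead also lifts common variables $T_a$, $a\in R$, into the residue classes of the coordinates $a_i$, so $\rho(T_a)=a$ gives surjectivity directly. It then proves the PID property by hand from the $Z_j$ in Lemma~\ref{finite-intersection}, which also makes $t=Z_1\cdots Z_n$ explicit.
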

	
	In particular, any finite product of fields of cardinality at most $2^{\aleph_0}$ has a characteristic-zero semilocal PID source, regardless of the residue characteristics. If one factor is finite, then the same cardinal bound on all other factors is also necessary. Taking
	\[
	R=\F_2\times\F_2(X_s:s\in S),\qquad |S|>2^{\aleph_0},
	\]
	gives a semisimple ring with no Dedekind-domain source, even when the kernel is allowed to be arbitrary. The exclusion of a Noetherian source also follows from the bound of Kearnes and Oman. The ring still has the B\'ezout source of Theorem~\ref{mainA}. Thus weak global dimension one can always be achieved, while global dimension at most one cannot; the latter assertion is justified in Corollary~\ref{negative-example}.
	
	The construction for Theorem~\ref{mainB} uses complete DVRs of characteristic zero with the specified residue fields. Every prescribed residue class contains enough elements to choose algebraically independent lifts of a common set of variables. Additional variables have value one at one chosen place and zero at all the others. These elements give a direct proof that the intersection is a PID. In particular, surjectivity onto the product is built into the lifts, rather than inferred from surjectivity onto separate factors.
	
	Section~\ref{sec-regular} proves Theorem~\ref{mainA}, describes the maximal localizations, and considers higher powers of the principal kernel. Section~\ref{sec-semisimple} proves Theorem~\ref{mainB} and records its consequences. The two constructions serve different purposes: the first handles arbitrary Boolean spectra, while the second uses the finiteness of the spectrum to obtain discrete valuations and Noetherianity.
	
	\section{Realization of von Neumann regular rings}\label{sec-regular}
	
	\subsection{Principal quotients and Boolean spectra}
	
	We begin with an elementary construction which explains the unrestricted principal-quotient assertion in the introduction. Its proof is included in full; no B\'ezout or Pr\"ufer property is claimed for this source.
	
	\begin{proposition}\cite[Proposition 1.2]{Costa}\label{universal}
		Every commutative ring $B$ is isomorphic to $E/zE$ for a characteristic-zero integral domain $E$ and a nonzero $z\in E$. If $B\ne0$, then $z$ is a nonunit.
	\end{proposition}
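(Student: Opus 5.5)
Present $B$ as a quotient of a polynomial ring over $\Z$, and then adjoin a new variable that is a nonzerodivisor generating the kernel inside a Rees-type subring. Write $B\cong P/I$ with $P=\Z[X_s:s\in S]$ a polynomial ring over $\Z$, which is a characteristic-zero domain. Then set
\[
E=P[zX^{-1}\text{-type elements}]\;:=\;P+ zP[z] + \sum_{f\in I} (f/z)\,\Z[\ldots],
\]
that is, the subring
\[
E=P\bigl[\,f/z : f\in I\,\bigr]\subseteq P[z,z^{-1}],
\]
where $z$ is a new indeterminate. Equivalently, $E$ is the image of the extended Rees algebra. The standard choice is
\[
E=\Bigl\{\textstyle\sum_{n\in\Z} a_n z^n : a_n\in P,\ a_n\in I^{-n}\text{ for } n<0\Bigr\}\quad\text{(with } I^{m}=P \text{ for } m\le 0\text{)},
\]
but the cleaner version for our purpose is
\[
E=P[z]+\sum_{n\ge1} I\,z^{-n}.
\]

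Here is the plan with the cleanest choice. Let $E=P[z,\,fz^{-1}\,(f\in I)]\subseteq P[z,z^{-1}]$, so that $E=\bigoplus_{n\in\Z}I_n z^n$ with $I_n=P$ for $n\ge0$ and $I_n=I^{-n}$ for $n<0$. This is the extended Rees algebra. It is a subring of the domain $P[z,z^{-1}]$, hence a domain of characteristic zero, and $z\neq0$.

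The key computation is that
\[
zE=\bigoplus_n I_{n-1}z^n .
\]
Consequently $E/zE\cong\bigoplus_n I_n/I_{n+1}$, which is the associated graded ring $\mathrm{gr}_I(P)$, not $P/I$. So the extended Rees algebra alone does not give the result, and I would instead take the \emph{ungraded} quotient by $z-1$. This gives $E/(z-1)E\cong P$, which is also wrong.

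The fix I would try is to kill all of $I$ in one step. Take
\[
E=P[\,f/z : f\in I\,]\subseteq P[z^{\pm1}]
\]
and then pass to the quotient by the principal ideal $zE$. The degree-$0$ part of $zE$ is $z\cdot(I z^{-1})=I$, which is what we want. However, negative-degree components survive, since $E/zE$ contains $I^m/I^{m+1}$ in degree $-m$.

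To remove these, I would impose the relations $f/z\cdot g/z = (fg/z)\cdot(1/z)$ differently. The alternative is to take $E$ to be the subring of $\Frac(P)(z)$ generated by $P$ and all $f/z$, further localized or quotiented to make $(f/z)(g/z)\in zE$. This is the main obstacle: the graded construction produces $\mathrm{gr}$, not $P/I$.

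I expect the working approach to be Costa's actual trick. Choose a presentation in which $I$ is generated by elements $f_s$, and put
\[
E=P[Y_s]/(f_s-zY_s)
\]
with $z$ a new variable, so that $E=\Z[X,Y,z]/(f_s-zY_s)$. Then
\[
E/zE\cong P[Y_s]/(f_s)=B[Y_s],
\]
which is still too large. Adding the relations $Y_s=zW_s$ iteratively kills the $Y_s$ at the cost of new variables. Taking the union over infinitely many stages gives $E_\infty$ with $E_\infty/zE_\infty\cong B$.

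It then remains to show that $E_\infty$ is a domain. I would check this stagewise: each stage $\Z[X,Y,z]/(f_s-zY_s)$ embeds into $P[z^{\pm1}]$ via $Y_s\mapsto f_s/z$, because it is $z$-torsion-free. I would verify this by noting that $z$ is a nonzerodivisor modulo $(f_s-zY_s)$, via a Gröbner/regular-sequence argument with $z$-leading terms $-zY_s$. This stagewise domain check is the step I expect to be hardest. Finally, the claim that $z$ is a nonunit when $B\neq0$ is immediate, since $E/zE\cong B\ne0$.
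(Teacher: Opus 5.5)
There is a genuine gap, and it sits at the step you flagged as hardest: that step is false. Your stage rings $P[Y_s,z]/(f_s-zY_s)$, and the colimit $E_\infty$ obtained by adjoining $Y^{(n+1)}_s$ with $Y^{(n)}_s=zY^{(n+1)}_s$, do satisfy $E_\infty/zE_\infty\cong B$. But $z$ is in general a zero divisor on them. Every syzygy $\sum_s g_sf_s=0$ in $P$ gives an element $\sum_s g_sY^{(1)}_s$ killed by $z$, and the relations $f_s=zY_s$ do not force it to vanish.

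Concretely, let $P=\Z[X_1,X_2,X_3]$, $f_1=X_1X_2$, $f_2=X_1X_3$ (an irredundant generating set), and $w=X_3Y^{(1)}_1-X_2Y^{(1)}_2$. Then $zw=X_1X_2X_3-X_1X_2X_3=0$. In the $N$th stage ring, $w=z^{N-1}(X_3Y^{(N)}_1-X_2Y^{(N)}_2)$. The specialization $X_1\mapsto 0$ maps that ring onto
\[
\Z[X_2,X_3,z,Y^{(N)}_1,Y^{(N)}_2]/(z^NY^{(N)}_1,\,z^NY^{(N)}_2).
\]
There the image of $w$ is nonzero, because the ideal is monomial and neither monomial of $w$ lies in it. Hence $w\ne0$ at every stage and therefore in $E_\infty$. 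So $E_\infty$ is not a domain, and the map $Y^{(n)}_s\mapsto f_s/z^n$ into $P[z^{\pm1}]$ is not injective. (With a repeated generator $f_1=f_2$ one already gets $z(Y_1-Y_2)=0$.) No Gr\"obner or regular-sequence argument can rescue the presented ring.

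The repair is to work with the image of $E_\infty$ in $P[z,z^{-1}]$ rather than with the presentation. That image is exactly the ring you wrote down as the ``cleaner version'' and then abandoned:
\[
E=P[z]\oplus\bigoplus_{n\ge1}Iz^{-n}\subseteq P[z,z^{-1}].
\]
It differs from the extended Rees algebra $P[z,fz^{-1}]$ in that every negative degree carries $I$ rather than $I^{n}$. This needs all the elements $f/z^n$, not only $f/z$.

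\emph{Subring and domain.} $E$ is a subring because $I$ is an ideal: a product of homogeneous pieces of negative total degree has a factor of negative degree, hence coefficient in $I$. So $E$ is a characteristic-zero domain containing $z\ne0$.

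\emph{Computing $zE$.} The degree-$m$ component of $zE$ is the degree-$(m-1)$ component of $E$. This is $I$ for $m\le0$ and $P$ for $m\ge1$.

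\emph{The quotient map.} The map $\sum_m a_mz^m\mapsto a_0+I$ is a ring homomorphism $E\to P/I$. Indeed, in $(ab)_0=a_0b_0+\sum_{m\ne0}a_mb_{-m}$, each extra term has a factor in $I$. The map is onto because $P\subseteq E$. Its kernel $\{a\in E:a_0\in I\}$ is exactly $zE$. Thus $E/zE\cong B$, and $z$ is a nonunit when $B\ne0$.

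This degree computation must be done directly. The isomorphism $E_\infty/zE_\infty\cong B$ does not formally pass to the quotient of $E_\infty$ by its $z$-torsion. The paper gives no argument beyond its citation of Costa, so the graded-subring computation above is the complete proof you should supply.
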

	
	For the rest of the positive construction, $R$ is a fixed commutative von Neumann regular ring. If $a\in R$, write
	\[
	\mathcal D(a)=\{x\in\Max R:a\notin x\},\qquad
	\mathcal V(a)=\{x\in\Max R:a\in x\}.
	\]
	This notation distinguishes basic open sets from the source domain $D$ to be constructed.
	
	The following facts are standard in the theory of commutative regular rings. We include the elementary proof to fix the idempotents and generalized inverses used later.
	
	\begin{lemma}\cite{Pierce}\cite[Tags 092F and 00EE]{Stacks}\label{boolean}
		Put $X=\Max R$ and $k_x=R/x$. Then the following hold.
		\begin{enumerate}
			\item Every prime of $R$ is maximal, $R$ is reduced, and $R_x\cong k_x$ for each $x\in X$.
			\item The map $\iota:R\to\prod_{x\in X}k_x$ given by residues is injective.
			\item The space $X$ is compact Hausdorff with a basis of clopen sets. Each clopen set is $\mathcal D(e)$ for a unique idempotent $e\in R$.
			\item For each $b\in R$ there is a unique $b^\#\in R$ such that
			\[
			b^2b^\#=b,\qquad b(b^\#)^2=b^\#.
			\]
			Its residue in $k_x$ is $(b+x)^{-1}$ when $b\notin x$, and is zero when $b\in x$.
		\end{enumerate}
	\end{lemma}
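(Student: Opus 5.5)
The plan is to start from the defining property of a von Neumann regular ring: for every $b\in R$ there is $c\in R$ with $b^2c=b$. Everything in the lemma follows from this by elementary manipulations with the idempotents $e=bc$. I would prove the four parts in the order (4), (1), (2), (3). This is because the generalized inverse $b^\#$ and the idempotent $bb^\#$ are the tools for the rest.

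For (4), given $c$ with $b^2c=b$, I put $b^\#=bc^2$. Then $b^2b^\#=b^3c^2=(b^2c)(bc)=b\cdot bc=b^2c=b$. Also $b(b^\#)^2=b^3c^4=(b^2c)\,bc^3=b^2c^3=(b^2c)c^2=bc^2=b^\#$. For uniqueness, suppose $u,v$ both satisfy the two equations. Then $bu$ and $bv$ are idempotents, and $bu\cdot bv=bv$ and $bu\cdot bv=bu$ (using $b^2u=b$ and $b^2v=b$), so $e:=bu=bv$. Hence $u=bu^2=eu=bvu$ and symmetrically $v=buv$, so $u=v$. For the residues: if $b\notin x$, then in the field $k_x$ we have $\bar b^2\,\overline{b^\#}=\bar b$, which forces $\overline{b^\#}=\bar b^{-1}$. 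If $b\in x$, then $b^\#=b(b^\#)^2\in x$.

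For (1), let $\mathfrak p$ be prime and $b\notin\mathfrak p$. Since $b(1-bb^\#)=0\in\mathfrak p$, we get $bb^\#\equiv1$ modulo $\mathfrak p$, so $R/\mathfrak p$ is a field and $\mathfrak p$ is maximal. Reducedness follows from $b=b^2b^\#$: if $b^2=0$ then $b=0$, and by induction any nilpotent is zero. For $R_x\cong k_x$, it suffices that $xR_x=0$. For $b\in x$, the idempotent $e=bb^\#$ lies in $x$, so $1-e\notin x$, and $(1-e)b=0$; hence $b/1=0$ in $R_x$. Part (2) is then immediate, since the intersection of all maximal ideals equals the nilradical, which is zero. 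Alternatively, if $b\in\bigcap x$, then $1-bb^\#$ is a unit, so $b=0$.

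For (3), I would first show that $\mathcal D(b)=\mathcal D(bb^\#)$, and that for an idempotent $e$ the complement of $\mathcal D(e)$ is $\mathcal D(1-e)$. So the basic opens $\mathcal D(b)$ are clopen and form a basis. Hausdorffness: given $x\ne y$, pick $b\in x\setminus y$; with $e=bb^\#$, the sets $\mathcal D(e)\ni y$ and $\mathcal D(1-e)\ni x$ are disjoint. Compactness is the standard quasi-compactness of $\Spec R=\Max R$. The remaining point, which I expect to be the only real obstacle, is that every clopen set is some $\mathcal D(e)$ and that $e$ is unique.

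For existence, a clopen $U$ is open and compact, hence a finite union $\bigcup\mathcal D(e_i)$ of idempotent basic sets. Then $U=\mathcal D(e)$ with $e=1-\prod(1-e_i)$.

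For uniqueness, if $\mathcal D(e)=\mathcal D(f)$, then $e-f$ lies in every maximal ideal, since both idempotents have residues in $\{0,1\}$ that agree at every point. So $e-f=0$ by (2).
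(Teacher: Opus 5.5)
Your proposal is correct and follows essentially the same route as the paper: the same generalized inverse $b^\#=bc^2$ (so $bb^\#=bc$ is exactly the paper's idempotent $e$), the same clopen, Hausdorff and compactness arguments, and the same uniqueness of the idempotent via injectivity of $\iota$. The differences are minor and both versions are valid: you prove uniqueness of $b^\#$ by a direct idempotent computation rather than through residues and injectivity of $\iota$, and you obtain $xR_x=0$ from $(1-bb^\#)b=0$ rather than from reducedness of the local ring $R_x$.
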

	
	\begin{proof}
		The identity $b=b^2c$ passes to quotients and localizations. A domain satisfying it is a field, since $b\ne0$ implies $bc=1$. Thus every prime is maximal. Iterating $b=b^2c$ gives $b=b^n c^{n-1}$ for every $n\ge1$; a nilpotent $b$ must therefore be zero. A localization at a maximal ideal is reduced and has a unique prime. Its maximal ideal is its nilradical and hence is zero, so it is a field. The residue map identifies it with $k_x$. The kernel of $\iota$ is the intersection of all primes, which is zero.
		
		Given $b=b^2c$, put $e=bc$. Then $e^2=e$ and $bR=eR$, so $\mathcal D(b)=\mathcal D(e)$. For an idempotent $e$, the complement of $\mathcal D(e)$ is $\mathcal D(1-e)$. Thus these basic opens are clopen. If $x\ne y$, choose $b\in x\setminus y$ and use the complementary idempotent opens associated to $b$ to separate $x$ and $y$. Compactness is the quasi-compactness of $\Spec R=X$.
		
		A clopen set $U$ is compact, so finitely many idempotent basic opens $\mathcal D(e_i)\subseteq U$ cover it. Then
		\[
		e=1-\prod_i(1-e_i)
		\]
		is idempotent and $\mathcal D(e)=U$. An idempotent has residue either zero or one in every $k_x$, and injectivity of $\iota$ proves uniqueness.
		
		Finally, set $b^\#=bc^2$. The relation $b=b^2c$ verifies both displayed identities. In a field these identities determine exactly the residues stated in (4), including the zero case. Injectivity of $\iota$ proves uniqueness in $R$.
	\end{proof}
	
	In particular, the idempotent associated to a clopen set is its characteristic function in the residue fields. Finite clopen partitions give finite families of orthogonal idempotents summing to $1$. These elementary facts also explain the usual equivalence with absolute flatness; see \cite[Tag 092F]{Stacks}. Notice that $\Jac(R)=0$, since all primes are maximal and $R$ is reduced.
	
	\subsection{Valuations on a common rational function field}
	
	We use additive valuations and set $v(0)=\infty$. All ordered groups are totally ordered abelian groups. A valuation ring consists of the elements of nonnegative value, and its maximal ideal consists of those of positive value.
	
	\subsubsection{The ordered weights}
	
	Choose a well-order on $R\setminus\{0\}$. Let
	\[
	H=\bigoplus_{a\in R\setminus\{0\}}\Z e_a
	\]
	with the lexicographic order: the sign of a nonzero element is the sign of its first nonzero coordinate. Every element has finite support, so this is well defined. Put
	\begin{equation}\label{weights}
		\Gamma=H\oplus_{\lex}\Z,\qquad
		\gamma_a=(e_a,0)\ (a\ne0),\qquad \gamma_0=(0,1).
	\end{equation}
	The family $(\gamma_a)_{a\in R}$ is a basis of the underlying free abelian group. Therefore, if $\alpha:R\to\N$ has finite support, write $Y^\alpha=\prod_aY_a^{\alpha(a)}$ and define its weight by
	\[
	\gamma(\alpha)=\sum_a\alpha(a)\gamma_a.
	\]
	This determines $\alpha$ uniquely. All nonconstant monomials have positive weight. Moreover, $\gamma_0$ is the least positive element of $\Gamma$. Indeed, a positive element with nonzero $H$-component is larger than every element of $\{0\}\oplus\Z$; the other positive elements are $(0,n)$ with $n\ge1$.
	
	We use the usual monomial-valuation construction. General valuation background is in \cite{EnglerPrestel}; the proof below gives the specific ordered-group and residue calculations required here.
	
	\begin{lemma}\label{monomial}
		For any field $k$, the rule
		\[
		w\left(\sum_\alpha c_\alpha Y^\alpha\right)
		=\min_{c_\alpha\ne0}\gamma(\alpha)
		\]
		on nonzero polynomials extends to a valuation on $k(Y_a:a\in R)$. Its residue field is $k$. If nonzero polynomials $P,Q$ have lowest-weight exponents $\alpha,\beta$, then $w(P/Q)=0$ implies $\alpha=\beta$, and the residue of $P/Q$ is the ratio of their lowest-weight coefficients.
	\end{lemma}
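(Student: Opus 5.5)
The plan is to prove the standard fact that a monomial order on exponents induces a valuation on the polynomial ring $k[Y_a:a\in R]$. This valuation then extends uniquely to the fraction field by $w(P/Q)=w(P)-w(Q)$. The key point is that $\Gamma$ is a totally ordered abelian group and $\alpha\mapsto\gamma(\alpha)$ is injective on finitely supported exponent functions. Injectivity holds because the $\gamma_a$ form a $\Z$-basis of $\Gamma$. So every nonzero polynomial $P$ has a unique lowest-weight monomial, and I write $\mathrm{in}(P)=c_\alpha Y^\alpha$ for it. Then $w(P)=\gamma(\alpha)$.

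First I verify the two valuation axioms on polynomials. The ultrametric inequality $w(P+Q)\ge\min\{w(P),w(Q)\}$ is immediate: every monomial of $P+Q$ occurs in $P$ or in $Q$, so its weight is at least the minimum. Multiplicativity is the main step. Let $P$ and $Q$ have lowest-weight exponents $\alpha$ and $\beta$. In $PQ$, the coefficient of $Y^{\alpha+\beta}$ is $\sum c_{\alpha'}d_{\beta'}$ over pairs with $\alpha'+\beta'=\alpha+\beta$. For such a pair, $\gamma(\alpha')+\gamma(\beta')=\gamma(\alpha)+\gamma(\beta)$, with $\gamma(\alpha')\ge\gamma(\alpha)$ and $\gamma(\beta')\ge\gamma(\beta)$. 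In an ordered group this forces equality in both. By injectivity of $\gamma$, the only such pair is $(\alpha,\beta)$ itself. Hence that coefficient is $c_\alpha d_\beta\neq0$. Every other monomial of $PQ$ has weight at least $\gamma(\alpha)+\gamma(\beta)$. So $w(PQ)=w(P)+w(Q)$ and $\mathrm{in}(PQ)=\mathrm{in}(P)\mathrm{in}(Q)$. This also shows the polynomial ring is a domain. The extension to the fraction field is then well defined and is a valuation, by the usual argument.

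For the residue field, I first note that constants $c\in k^\times$ have value $0$, so $k$ embeds in the residue field $\kappa_w$. Now take $P/Q$ with $w(P/Q)=0$. Then $\gamma(\alpha)=\gamma(\beta)$, so $\alpha=\beta$ by injectivity, which is the stated claim. Put $c=c_\alpha/d_\alpha$. Then $P-cQ$ has no $Y^\alpha$ term, so each of its monomials has weight strictly greater than $\gamma(\alpha)$. Hence $w(P-cQ)>w(Q)$, which gives $w(P/Q-c)>0$. Therefore $P/Q$ has residue $c\in k$. It follows that the residue field is exactly $k$, and the residue of $P/Q$ is the ratio of the lowest-weight coefficients.

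The only delicate point is the strictness argument in the multiplicativity step. That argument uses just two facts: $\Gamma$ is totally ordered (the lexicographic order on $H\oplus_{\lex}\Z$ is a total group order, which is easy because supports are finite), and $\gamma$ is injective. The positivity of nonconstant monomials is not needed here. It only ensures that the $Y_a$ lie in the maximal ideal of the valuation ring, which will be used later in the paper.
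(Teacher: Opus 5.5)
Your proposal is correct and follows essentially the same route as the paper: injectivity of $\gamma$ gives a unique lowest term, the product of the lowest terms cannot cancel (hence $w(PQ)=w(P)+w(Q)$), and cancelling the lowest terms in $P-(c_\alpha/d_\alpha)Q$ shows $w(P/Q-c_\alpha/d_\alpha)>0$, which identifies the residue. Your explicit check that only the pair $(\alpha,\beta)$ contributes to the coefficient of $Y^{\alpha+\beta}$ simply spells out the paper's remark that every other product of terms has strictly larger weight.
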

	
	\begin{proof}
		The nonzero terms of a polynomial have distinct weights, so there is a unique lowest term. If the lowest exponents of $P,Q$ are $\alpha,\beta$, the product of the corresponding terms has weight $\gamma(\alpha)+\gamma(\beta)$. Every other product of terms has strictly larger weight. Thus its nonzero coefficient cannot cancel, and $w(PQ)=w(P)+w(Q)$. The inequality $w(P+Q)\ge\min\{w(P),w(Q)\}$ follows directly from the formula. Hence $w(P/Q)=w(P)-w(Q)$ is a well-defined valuation.
		
		If this value is zero, injectivity of the weight map gives $\alpha=\beta$. Let $c,d\in k^\times$ be the lowest coefficients of $P,Q$. The lowest terms in $dP-cQ$ cancel, so
		\[
		w\left(\frac P Q-\frac c d\right)>0
		\]
		unless the difference is zero, when its value is $\infty$. Every value-zero fraction therefore has residue $c/d$. Positive-value fractions have zero residue, while constants realize every element of $k$. This identifies the residue field.
	\end{proof}
	
	\subsubsection{Translation and residue characteristic}
	
	Set
	\begin{equation}\label{commonfield}
		K=\Q(T_a:a\in R),\qquad t=T_0,\qquad
		\Delta=\Z\oplus_{\lex}\Gamma,\qquad \delta=(0,\gamma_0).
	\end{equation}
	Here $T_0$ is a nonzero indeterminate. The element $\delta$ is the least positive element of $\Delta$.
	
	For $x\in X$, write $\bar a=a+x\in k_x$, and let $k_x^0$ be the prime subfield of $k_x$. Translation gives an embedding
	\begin{equation}\label{translation}
		k_x^0(T_a:a\in R)\longrightarrow k_x(Y_a:a\in R),
		\qquad T_a\longmapsto\bar a+Y_a.
	\end{equation}
	To see injectivity, first extend the source coefficient field to $k_x$; translation by $(\bar a)$ is an automorphism of the polynomial ring over $k_x$. Restrict the valuation in Lemma~\ref{monomial} along this embedding, and call it $\omega_x$. Its residue field is exactly $k_x$: all residues lie in $k_x$, and $T_a$ has residue $\bar a$, which runs through every element of $k_x$.
	
	If $\Char k_x=0$, the prime field is $\Q$, so define on $K$
	\begin{equation}\label{charzero}
		v_x(f)=(0,\omega_x(f)).
	\end{equation}
	There is no map $\Q\to k_x$ when $\Char k_x>0$. We handle that case by a separate first valuation.
	
	The next lemma is the Gauss extension of the $p$-adic valuation to a rational function field; see \cite{EnglerPrestel} for the general construction.
	
	\begin{lemma}\label{gauss}
		For every prime integer $p$, there is a valuation $u_p$ on $K$ given on nonzero polynomials by
		\[
		u_p\left(\sum_\alpha q_\alpha T^\alpha\right)
		=\min_{q_\alpha\ne0}\operatorname{ord}_p(q_\alpha).
		\]
		Its value group is $\Z$ and its residue field is $\F_p(T_a:a\in R)$. Every primitive polynomial in $\Z[T_a:a\in R]$ has $u_p$-value zero for every $p$.
	\end{lemma}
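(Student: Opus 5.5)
The plan is to prove the Gauss lemma for each prime $p$ and polynomials in the variables $T_a$. First I show that the content function $u_p$ on nonzero polynomials in $\Q[T_a:a\in R]$ is multiplicative and ultrametric. Then I extend it to $K$ by $u_p(P/Q)=u_p(P)-u_p(Q)$ and identify the residue field.

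The ultrametric inequality $u_p(P+Q)\ge\min\{u_p(P),u_p(Q)\}$ is immediate from the coefficientwise formula, because $\operatorname{ord}_p$ is itself a valuation on $\Q$. For multiplicativity, write $P=p^{m}P_0$ and $Q=p^{n}Q_0$, where $m=u_p(P)$, $n=u_p(Q)$, and $P_0,Q_0$ have all coefficients in $\Z_{(p)}$ with some coefficient a $p$-adic unit. Then $u_p(PQ)=m+n+u_p(P_0Q_0)$, and $u_p(P_0Q_0)\ge0$ is clear.

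The key step, and the only real obstacle, is the classical Gauss lemma $u_p(P_0Q_0)=0$. I would reduce modulo $p$ via $\Z_{(p)}[T]\to\F_p[T_a:a\in R]$. The images $\bar P_0,\bar Q_0$ are nonzero, and a polynomial ring over a field is a domain, so $\bar P_0\bar Q_0\ne0$. Hence some coefficient of $P_0Q_0$ is a unit. Only finitely many variables occur in any given computation, so infinitely many variables cause no difficulty. With multiplicativity in hand, the extension to $K$ is well defined and is a valuation by the standard argument. The value group is $\Z$, since $u_p(p)=1$ and all values are integers.

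For the residue field, any $f\in K^\times$ with $u_p(f)=0$ can be written as $P_0/Q_0$ with $u_p(P_0)=u_p(Q_0)=0$. This is done by scaling numerator and denominator by the same power of $p$ and by a rational factor. The residue of $f$ is then $\bar P_0/\bar Q_0\in\F_p(T_a:a\in R)$. To see that this is well defined and injective, note that $P_0/Q_0$ has positive value exactly when $\bar P_0=0$. Conversely, every element of $\F_p(T_a)$ is of the form $\bar P/\bar Q$ with $\bar Q\ne0$; lifting $\bar P$ and $\bar Q$ to integer polynomials gives an element of value zero with that residue. So the residue field is $\F_p(T_a:a\in R)$.

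Finally, a primitive polynomial in $\Z[T_a]$ has coprime integer coefficients. Hence for each $p$ some coefficient is prime to $p$, which gives $u_p=0$ directly from the formula.
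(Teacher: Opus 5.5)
Your proof is correct and follows essentially the same route as the paper: normalize by a power of $p$, reduce modulo $p$ into the domain $\F_p[T_a:a\in R]$ to get multiplicativity, extend to $K$, read off the value group from $u_p(p)=1$, identify the residue field by reducing value-zero fractions, and use a coefficient prime to $p$ for primitive polynomials. The only difference is that you spell out the well-definedness, injectivity and surjectivity of the residue map, which the paper leaves implicit.
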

	
	\begin{proof}
		Multiply each polynomial by a suitable power of $p$ to make its minimum coefficient order zero. The normalized polynomial has coefficients in $\Z_{(p)}$ and nonzero reduction over $\F_p$. Products of nonzero polynomial reductions remain nonzero. Undoing the normalization proves multiplicativity of the formula; the sum inequality is immediate. It therefore extends to the fraction field. The element $p$ has value $1$. Reducing fractions of value zero identifies the residue field with the stated rational function field. A primitive integer polynomial has a coefficient not divisible by $p$, proving the last assertion. Each calculation involves only finitely many variables.
	\end{proof}
	
	Suppose $\Char k_x=p>0$. If $f\in K^\times$ and $n=u_p(f)$, then $p^{-n}f$ is a unit of the Gauss valuation ring and has a nonzero reduction, denoted $\widetilde{p^{-n}f}$. Define
	\begin{equation}\label{composite}
		v_x(f)=\bigl(n,\omega_x(\widetilde{p^{-n}f})\bigr)\in\Delta.
	\end{equation}
	Multiplicativity follows by multiplying normalized units and reducing. For a sum with different first values, the lower first value persists and its normalized reduction is unchanged. For a sum with equal first values, either the normalized reductions have nonzero sum, in which case the inequality for $\omega_x$ applies, or they cancel, in which case the first value increases. Thus \eqref{composite} is a valuation. This is the usual composition of valuations, written here with a fixed normalization; see also \cite{EnglerPrestel}.
	
	Let $V_x$ be the valuation ring of $v_x$, and let $\rho_x:V_x\to k_x$ be its residue map. In positive characteristic, an element with positive first value has residue zero. An element with first value zero is reduced first by $u_p$ and then by $\omega_x$. It follows that the residue field is again $k_x$. We regard all $v_x$ as taking values in the same ordered group $\Delta$; their images need not be all of $\Delta$.
	
	\begin{lemma}\label{variables}
		For $a\in R$ and $x\in X$,
		\[
		v_x(T_a)=
		\begin{cases}
			0,&a\notin x,\\
			(0,\gamma_a),&a\in x,
		\end{cases}
		\qquad \rho_x(T_a)=a+x.
		\]
		In particular, $v_x(t)=\delta$ for every $x$, and $\Z[T_a:a\in R]\subseteq V_x$.
	\end{lemma}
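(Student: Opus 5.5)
The proof is a direct computation from the definitions \eqref{translation}, \eqref{charzero} and \eqref{composite}, split by the characteristic of $k_x$. First compute $\omega_x(T_a)$ and its residue. Under \eqref{translation}, $T_a$ maps to the polynomial $\bar a+Y_a$. If $a\notin x$, then $\bar a\ne0$, so the lowest-weight term is the constant $\bar a$ of weight $0$. By Lemma~\ref{monomial}, $\omega_x(T_a)=0$ and the residue is $\bar a$. If $a\in x$, then $\bar a=0$ and the image is the monomial $Y_a$, whose weight is $\gamma_a$. Hence $\omega_x(T_a)=\gamma_a>0$ and the residue is $0=\bar a$. In both cases the residue is $a+x$.

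In characteristic zero, \eqref{charzero} gives $v_x(T_a)=(0,\omega_x(T_a))$. This is $0$ or $(0,\gamma_a)$ as claimed, and the residue is read off from $\omega_x$.

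In characteristic $p>0$, Lemma~\ref{gauss} gives $u_p(T_a)=0$, since $T_a$ is primitive. So $n=0$ and the Gauss reduction of $T_a$ is the variable $T_a$ in $\F_p(T_b:b\in R)$. Then \eqref{composite} gives $v_x(T_a)=(0,\omega_x(T_a))$. The residue, obtained by reducing first by $u_p$ and then by $\omega_x$, is again $\bar a$. The one point to check is that $\omega_x$ is applied on $k_x^0(T_b)=\F_p(T_b)$, which is the residue field of $u_p$. This matches the source of \eqref{translation} when $k_x^0=\F_p$.

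For $t=T_0$: we have $0\in x$ for every $x$, so $v_x(t)=(0,\gamma_0)=\delta$.

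For $\Z[T_a:a\in R]\subseteq V_x$, it suffices to check nonnegativity on monomials with integer coefficients, since a valuation satisfies $v(f+g)\ge\min\{v(f),v(g)\}$. The integers lie in $V_x$ in both cases. In characteristic zero, $\omega_x$ is trivial on $\Q^\times$, because constants have weight $0$. In characteristic $p$, a nonzero integer has nonnegative first coordinate $\operatorname{ord}_p$, and when that coordinate is zero its residue is a nonzero constant of $\omega_x$-value $0$. Each $T_a$ has nonnegative value by the computation above. Products of elements of nonnegative value have nonnegative value, so every monomial, and hence every polynomial, lies in $V_x$.

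The main obstacle is the positive-characteristic bookkeeping, namely that normalizing by $p^{-n}$ with $n=0$ commutes correctly with the translation embedding. This is routine once one notes that $T_a$ is primitive.
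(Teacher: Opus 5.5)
Your proposal is correct and follows essentially the same route as the paper. You compute the translated image $\bar a+Y_a$ and read off the value and residue from Lemma~\ref{monomial}, pass through the Gauss reduction (using $u_p(T_a)=0$) in positive characteristic, and check that nonzero integers have nonnegative value, so that the polynomial ring lies in $V_x$; you also spell out steps the paper leaves implicit, such as $0\in x$ giving $v_x(t)=\delta$ and the closure of $V_x$ under sums and products.
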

	
	\begin{proof}
		The translated polynomial is $\bar a+Y_a$. If $\bar a\ne0$, its lowest term is the constant term, giving value zero and residue $\bar a$. If $\bar a=0$, it is $Y_a$, giving the stated positive value and residue zero. In positive characteristic $u_p(T_a)=0$, so the same argument applies to its Gauss reduction. Integers have nonnegative value: their first value is nonnegative, and if it is zero their remaining value is zero. All the indicated polynomial generators therefore lie in $V_x$.
	\end{proof}
	
	\subsection{Finite clopen partitions and the residue ring}
	
	Individual valuation rings do not by themselves control their intersection. We now prove the uniformity that will do so in this construction.
	
	Every $f\in K^\times$ can be written
	\begin{equation}\label{primitiveform}
		f=cP/Q,\qquad c\in\Q^\times,\quad
		P,Q\in\Z[T_a:a\in R]\text{ nonzero and primitive}.
	\end{equation}
	Indeed, clear denominators in a numerator and a denominator, then take out their integer contents. For such a $P$, perform translation in the ring $R$ itself:
	\begin{equation}\label{coefficients}
		P(a+Y_a:a\in R)=\sum_\alpha A_\alpha(P)Y^\alpha
		\quad\text{in }R[Y_a:a\in R].
	\end{equation}
	The sum is finite. Only the finitely many variables occurring in $P$ are translated. In particular, these coefficients are elements of $R$, not choices of lifts to characteristic zero.
	
	\begin{lemma}\label{leadingpartition}
		At every $x\in X$, the reduction of \eqref{coefficients} in $k_x[Y_a:a\in R]$ is nonzero. Let $\alpha_P(x)$ be its lowest-weight exponent. For each possible exponent $\alpha$, the set
		\[
		U^P_\alpha=\{x\in X:\alpha_P(x)=\alpha\}
		\]
		is clopen. The nonempty sets $U^P_\alpha$ form a finite partition of $X$.
	\end{lemma}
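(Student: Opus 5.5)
The plan is to push the primitivity of $P$ into the residue rings and then use that each coefficient $A_\alpha(P)\in R$ vanishes on a clopen set.

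\emph{Step 1: the reduction at $x$ is nonzero.} Let $\bar P\in k_x^0[T_a]$ be the image of $P$ under $\Z\to k_x^0$. It is nonzero: if $\Char k_x=0$ this is clear, and if $\Char k_x=p$ it holds because $P$ is primitive, so some coefficient is prime to $p$. The reduction of \eqref{coefficients} modulo $x$ is the image of $\bar P$ under the translation \eqref{translation}. Residues commute with ring operations, so translating in $R$ and then reducing agrees with reducing and then translating. Since \eqref{translation} is injective, this image is nonzero. In particular the finite set
\[
S=\{\alpha:A_\alpha(P)\ne0\}
\]
is nonempty, and at each $x$ some $A_\alpha(P)\notin x$.

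\emph{Step 2: the partition.} Order the finite set $S$ by weight as $\alpha_1,\dots,\alpha_m$, so that $\gamma(\alpha_1)<\cdots<\gamma(\alpha_m)$. The weights are distinct because the weight map is injective. The lowest-weight exponent at $x$ is the first $\alpha_j$ with $A_{\alpha_j}(P)\notin x$. Hence
\[
U^P_{\alpha_j}=\mathcal V(A_{\alpha_1}(P))\cap\cdots\cap\mathcal V(A_{\alpha_{j-1}}(P))\cap\mathcal D(A_{\alpha_j}(P)),
\]
and $U^P_\alpha=\emptyset$ for $\alpha\notin S$. By Lemma~\ref{boolean}(3), each $\mathcal D(b)=\mathcal D(e)$ for an idempotent $e$, and its complement is $\mathcal V(b)=\mathcal D(1-e)$. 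Both are clopen, so each $U^P_\alpha$ is a finite intersection of clopen sets and is clopen.

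These sets are disjoint because $\alpha_P(x)$ is a single exponent. They cover $X$ by Step 1. Only finitely many are nonempty, since $S$ is finite. This gives a finite clopen partition of $X$.

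\emph{Main obstacle.} The only delicate point is Step 1, specifically the compatibility of translation with reduction. One has to check carefully that the coefficients $A_\alpha(P)\in R$ reduce to the coefficients of the translate of $\bar P$ over $k_x$. This follows because $A_\alpha(P)$ is an integer polynomial expression in the elements $a\in R$. One also has to invoke primitivity in positive characteristic, which is where $\bar P\ne0$ comes from.
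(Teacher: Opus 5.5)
Your proposal is correct and follows essentially the same route as the paper. Nonvanishing of the reduction comes from primitivity (in characteristic $p$) together with injectivity of the translation, and then the same explicit formula writes $U^P_\alpha$ as $\mathcal D(A_\alpha(P))$ intersected with the sets $\mathcal V(A_\beta(P))$ for the lower-weight exponents $\beta$ in the support, with your check that reduction modulo $x$ commutes with translation spelling out a step the paper leaves implicit.
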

	
	\begin{proof}
		In characteristic zero, a nonzero integer polynomial stays nonzero over the prime field. In characteristic $p$, primitivity ensures that its reduction is nonzero. Translation over the field $k_x$ preserves nonzeroness in either case. A unique lowest-weight term therefore exists.
		
		Let $\mathcal S$ be the finite set of exponents with $A_\alpha(P)\ne0$ in $R$. For $\alpha\in\mathcal S$,
		\[
		U^P_\alpha=\mathcal D(A_\alpha(P))\cap
		\bigcap_{\substack{\beta\in\mathcal S\\
				\gamma(\beta)<\gamma(\alpha)}}
		\mathcal V(A_\beta(P)).
		\]
		Lemma~\ref{boolean} makes each set on the right clopen. The existence and uniqueness of the lowest exponent give the partition assertion.
	\end{proof}
	
	Write $e^P_\alpha$ for the characteristic idempotent of $U^P_\alpha$, and put
	\[
	\lambda_P(x)=\gamma(\alpha_P(x)).
	\]
	An exponent absent from the expansion is assigned $A_\alpha(P)=e^P_\alpha=0$.
	
	\begin{proposition}\label{finitevalues}
		For each $f\in K^\times$, the function $x\mapsto v_x(f)$ has finite image and clopen fibers. In particular, for $f,g\in K^\times$ the sets
		\[
		\{x:v_x(f)\ge0\},\qquad \{x:v_x(f)\le v_x(g)\}
		\]
		are clopen.
	\end{proposition}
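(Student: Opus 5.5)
We reduce to primitive polynomials and then read off the value of each polynomial from the clopen partition of Lemma~\ref{leadingpartition}. Write $f=cP/Q$ as in \eqref{primitiveform}. Since $v_x$ is a valuation, $v_x(f)=v_x(c)+v_x(P)-v_x(Q)$. It therefore suffices to show that each of $x\mapsto v_x(c)$, $x\mapsto v_x(P)$ and $x\mapsto v_x(Q)$ has finite image and clopen fibers. The sum of finitely many such functions again has this property: its fibers are finite unions of intersections of clopen fibers, taken over the common refinement of the partitions.

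For a primitive $P$, we claim that $v_x(P)=(0,\lambda_P(x))$ for every $x$. In characteristic zero, \eqref{charzero} gives $v_x(P)=(0,\omega_x(P))$. Under the translation \eqref{translation}, $P$ maps to the reduction of \eqref{coefficients} in $k_x[Y_a]$. By Lemma~\ref{monomial}, $\omega_x(P)$ is the weight of the lowest exponent of that reduction, namely $\lambda_P(x)$. In characteristic $p$, Lemma~\ref{gauss} gives $u_p(P)=0$, and the Gauss reduction is the image of $P$ in $\F_p[T_a]$. Translating it gives the same reduction of \eqref{coefficients} modulo $x$. This holds because the coefficients $A_\alpha(P)$ are computed in $R$ and are compatible with reduction $R\to k_x$, and this compatibility is the point to check carefully. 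So \eqref{composite} also yields $(0,\lambda_P(x))$. Hence the fibers of $x\mapsto v_x(P)$ are exactly the sets $U^P_\alpha$, grouped by weight, and these are finite in number and clopen by Lemma~\ref{leadingpartition}. The weight map is injective, so no grouping actually occurs.

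For the constant $c\in\Q^\times$, $v_x(c)=0$ whenever $\Char k_x=0$. When $\Char k_x=p$, $v_x(c)=(\operatorname{ord}_p c,0)$, because the normalized reduction is a nonzero constant and has $\omega_x$-value $0$. Only the finitely many primes $p$ dividing the numerator or denominator of $c$ give a nonzero value. Moreover, $\{x:\Char k_x=p\}=\mathcal V(p\cdot1_R)$ is clopen by Lemma~\ref{boolean}, since the residue of the integer $p$ in $k_x$ vanishes exactly when $\Char k_x=p$. So $x\mapsto v_x(c)$ takes finitely many values on a finite clopen partition. The remaining set, where $\Char k_x$ is $0$ or a prime not dividing $c$, is the complement of finitely many clopens and is therefore clopen.

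The final assertions follow directly. The set $\{x:v_x(f)\ge0\}$ is a finite union of fibers of $x\mapsto v_x(f)$, hence clopen. For $\{x:v_x(f)\le v_x(g)\}$, take the common refinement of the finite clopen partitions for $f$ and $g$. On each piece both values are constant, so the set is the union of those pieces on which the inequality holds. The main obstacle is the characteristic-$p$ identification: the Gauss reduction of $P$, translated by $(\bar a)$, must agree with the reduction of the expansion \eqref{coefficients} performed in $R$. Everything else is bookkeeping.
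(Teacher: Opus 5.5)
Your proof is correct and follows the paper's argument. The paper proves the combined formula $v_x(f)=\bigl(n_c(x),\lambda_P(x)-\lambda_Q(x)\bigr)$ directly; this is exactly your decomposition $v_x(c)+v_x(P)-v_x(Q)$ with $v_x(P)=(0,\lambda_P(x))$ and $v_x(c)=(n_c(x),0)$, followed by a common refinement of finite clopen partitions, and your explicit check that Gauss reduction followed by translation agrees with reducing \eqref{coefficients} modulo $x$ spells out a step the paper states in one line.
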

	
	\begin{proof}
		Use \eqref{primitiveform}, and define
		\[
		n_c(x)=
		\begin{cases}
			\operatorname{ord}_p(c),&\Char k_x=p>0,\\
			0,&\Char k_x=0.
		\end{cases}
		\]
		Only the finitely many prime divisors of the numerator and denominator of $c$ can give a nonzero value. The locus of characteristic $p$ is $\mathcal V(p1_R)$, which is clopen. Hence $n_c$ has finite image and clopen fibers; its zero fiber is the complement of a finite union of these loci. No openness of the characteristic-zero locus is being assumed.
		
		We claim that
		\begin{equation}\label{valueformula}
			v_x(f)=\bigl(n_c(x),\lambda_P(x)-\lambda_Q(x)\bigr).
		\end{equation}
		In characteristic zero this follows from translation and the fact that a nonzero rational constant has monomial value zero. In characteristic $p$, both $P$ and $Q$ have Gauss value zero. After normalizing $c$ by $p^{-\operatorname{ord}_p(c)}$, its reduction is a nonzero constant in $\F_p$, again of monomial value zero. The translated reductions of $P,Q$ have precisely the leading terms used to define $\lambda_P,\lambda_Q$. This proves the formula in all cases.
		
		The three functions on the right of \eqref{valueformula} have finite image and clopen fibers. Their common refinement is a finite clopen partition on which $v_x(f)$ is constant. Refining once more for $g$ proves the assertions about inequalities.
	\end{proof}
	
	Define
	\begin{equation}\label{intersection}
		D=\bigcap_{x\in X}V_x\subseteq K.
	\end{equation}
	It is a characteristic-zero domain, and Lemma~\ref{variables} gives
	\begin{equation}\label{fractionfield}
		\Z[T_a:a\in R]\subseteq D\subseteq K,
		\qquad \Frac(D)=K.
	\end{equation}
	The residue maps give a homomorphism $D\to\prod_x k_x$. The next proposition identifies its image exactly; this is stronger than surjectivity onto each separate residue field.
	
	\begin{proposition}\label{residuemap}
		There is a surjective homomorphism $\rho:D\twoheadrightarrow R$ such that
		\[
		\iota(\rho(f))=(\rho_x(f))_{x\in X},\qquad \rho(T_a)=a.
		\]
	\end{proposition}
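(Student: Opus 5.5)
We must show that for $f\in D$ the tuple $(\rho_x(f))_x$ lies in $\iota(R)$, and that every $a\in R$ is hit. Define $\rho(f)$ as the unique element of $R$ (unique by injectivity of $\iota$, Lemma~\ref{boolean}(2)) whose residues are $\rho_x(f)$. Once $\rho$ is well defined, it is a ring homomorphism, because each $\rho_x$ is one and $\iota$ is an injective homomorphism. Surjectivity is immediate from Lemma~\ref{variables}: $T_a\in D$ and $\rho_x(T_a)=a+x$ for all $x$, so $\rho(T_a)=a$.

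The key step is existence, which is a gluing argument over a finite clopen partition. Write $f=cP/Q$ as in \eqref{primitiveform}. By Proposition~\ref{finitevalues} and Lemma~\ref{leadingpartition}, refine the partitions for $n_c$, $\alpha_P$, and $\alpha_Q$ into a finite clopen partition $X=W_1\sqcup\cdots\sqcup W_m$. On each $W_j$ the values $n_c(x)$, $\alpha_P(x)=\alpha$, and $\alpha_Q(x)=\beta$ are constant. Let $\varepsilon_j$ be the idempotent of $W_j$. Since $f\in D$, $v_x(f)\ge0$ everywhere. By \eqref{valueformula}, on each $W_j$ we are in exactly one of two cases:
\begin{enumerate}
\item $v_x(f)>0$ throughout $W_j$, so $\rho_x(f)=0$;
\item $n_c(x)=0$ and $\gamma(\alpha)=\gamma(\beta)$ throughout $W_j$, hence $\alpha=\beta$ by injectivity of the weight map.
\end{enumerate}

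In case (2) we compute the residue via Lemma~\ref{monomial}. It equals $\bar c\cdot(A_\alpha(P)+x)/(A_\alpha(Q)+x)$, where $\bar c$ is the image of $c$. Here $c$ is a $p$-adic unit when $\Char k_x=p$, since $n_c(x)=0$, and in the composite \eqref{composite} with $n=0$ the Gauss reduction of $c$ is just its image in $\F_p$. The two leading coefficients are the residues of the elements $A_\alpha(P),A_\alpha(Q)\in R$ from \eqref{coefficients}, because translation commutes with reduction $R\to k_x$. On $W_j$, $A_\alpha(Q)\notin x$, so by Lemma~\ref{boolean}(4) the residue equals that of $c_j\,A_\alpha(P)A_\alpha(Q)^\#$.

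The main obstacle is giving the scalar $c$ a meaning in $R$. Write $c=r/s$ with coprime integers. On $W_j$, in case (2), $s$ is a unit in every $k_x$: it is coprime to every residue characteristic occurring there. So $s1_R\notin x$ for $x\in W_j$, and we may take $c_j=r\cdot(s1_R)^\#\in R$, whose residue is $\bar c$ on $W_j$. Then
\[
\rho(f)=\sum_{j\text{ in case }(2)}\varepsilon_j\,c_j\,A_{\alpha_j}(P)\,A_{\alpha_j}(Q)^\#
\]
has the prescribed residue at every $x$. This proves existence and hence the proposition.
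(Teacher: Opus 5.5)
Your proposal is correct and is essentially the paper's argument: you glue the pointwise residues over the finite clopen leading-term partitions using idempotents, the generalized inverses $A_\alpha(Q)^\#$, and an image of $c$ in $R$. The differences are cosmetic. The paper first shows that $s1_R$ is a unit of $R$, since integrality of $f$ rules out every prime factor of $s$ as a residue characteristic. It then uses the products $e^P_\alpha e^Q_\alpha$ together with the vanishing of $c_R$ where $n_c>0$. You instead omit the positive-value pieces of a common refinement and use $(s1_R)^\#$, which also suffices.
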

	
	\begin{proof}
		Fix $f\in D\setminus\{0\}$ and write $f=cP/Q$ as in \eqref{primitiveform}. Write $c=r/s$ in lowest terms, with $r\in\Z\setminus\{0\}$ and $s>0$. If a prime $p\mid s$ occurred as a residue characteristic, then $n_c(x)<0$ at a point of that characteristic. The first component in \eqref{valueformula} would force $v_x(f)<0$, contrary to $f\in D$. Thus no prime factor of $s$ occurs as a residue characteristic. The element $s1_R$ belongs to no maximal ideal, so it is a unit in $R$. Set
		\[
		c_R=(r1_R)(s1_R)^{-1}\in R.
		\]
		Using the generalized inverse from Lemma~\ref{boolean}, define the finite sum
		\begin{equation}\label{residueformula}
			r_f=c_R\sum_\alpha e^P_\alpha e^Q_\alpha
			A_\alpha(P)\bigl(A_\alpha(Q)\bigr)^\#\in R.
		\end{equation}
		We verify its residues point by point.
		
		At a point where $n_c(x)>0$, the composite residue of $f$ is zero. Here the characteristic divides $r$ and not $s$, so the residue of $c_R$, and hence of $r_f$, is also zero. A negative $n_c(x)$ is excluded by integrality.
		
		Suppose $n_c(x)=0$. Then $\lambda_P(x)\ge\lambda_Q(x)$. If the inequality is strict, the residue of $f$ is zero. The lowest exponents of $P,Q$ differ, so $e^P_\alpha e^Q_\alpha$ vanishes at $x$ for every $\alpha$, and the residue of \eqref{residueformula} is zero. If equality holds, injectivity of $\gamma$ gives a common lowest exponent $\alpha$. Its coefficient in $Q$ is nonzero at $x$. The only surviving summand in \eqref{residueformula} is then the ratio of the two lowest coefficients, multiplied by the residue of $c_R$. In characteristic zero this is exactly Lemma~\ref{monomial}. In characteristic $p$ the equality $n_c(x)=0$ says that $r,s$ are both $p$-units, so reduction of $c$ is also exactly the residue of $c_R$. The same lemma applies after Gauss reduction. Thus $r_f+x=\rho_x(f)$ in every case.
		
		Set $r_0=0$. The product residue map therefore has image in $\iota(R)$. Since $\iota$ is injective, it defines a unique homomorphism $\rho:D\to R$, independent of every expression used in \eqref{primitiveform}. Lemma~\ref{variables} gives $\rho(T_a)=a$. Hence $\rho$ is onto.
	\end{proof}
	
	\subsection{The B\'ezout property and the principal radical}
	
	We now use the finite partitions in two different ways: first to construct a generator of a two-generated ideal, and then to identify the common residue kernel.
	
	\begin{proposition}\label{bezout}
		The domain $D$ in \eqref{intersection} is B\'ezout.
	\end{proposition}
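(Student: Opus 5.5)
To prove $D$ is B\'ezout, I will show that for nonzero $f,g\in D$ the ideal $fD+gD$ is principal by producing a generator piecewise on a finite clopen partition of $X$. I will then glue the pieces using elements of $D$ that behave like idempotents.

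\textbf{Step 1: the partition.} By Proposition~\ref{finitevalues}, the set $W=\{x:v_x(f)\le v_x(g)\}$ is clopen, and so is its complement $X\setminus W$. On $W$ the expected generator is $f$, and on $X\setminus W$ it is $g$.

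\textbf{Step 2: lifting the idempotent.} Let $e\in R$ be the idempotent with $\mathcal D(e)=W$ (Lemma~\ref{boolean}). Consider the variable $T_e\in D$. By Lemma~\ref{variables}, $v_x(T_e)=0$ with residue $1$ when $x\in W$. When $x\notin W$, we get $v_x(T_e)=(0,\gamma_e)>0$. Similarly $T_{1-e}$ has value $0$ and residue $1$ off $W$, and positive value on $W$.

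\textbf{Step 3: the candidate generator.} I propose
\[
h=fT_e+gT_{1-e}\in D.
\]
At $x\in W$ we have $v_x(fT_e)=v_x(f)$, while
\[
v_x(gT_{1-e})=v_x(g)+(0,\gamma_{1-e})>v_x(g)\ge v_x(f).
\]
Hence $v_x(h)=v_x(f)=\min\{v_x(f),v_x(g)\}$. The case $x\notin W$ is symmetric, with strict inequality $v_x(g)<v_x(f)$. So at every $x$,
\[
v_x(h)=\min\{v_x(f),v_x(g)\}.
\]

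\textbf{Step 4: $h$ generates the ideal.} First, $h\in fD+gD$ by construction. Conversely, $v_x(f/h)\ge0$ and $v_x(g/h)\ge0$ for all $x$. By the definition~\eqref{intersection} of $D$, this gives $f/h,\,g/h\in D$. Therefore $fD+gD=hD$. Induction on the number of generators handles every finitely generated ideal, so $D$ is B\'ezout.

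The step that needs the most care is Step 3 at points where $v_x(f)=v_x(g)$. There the perturbation by $T_{1-e}$ must give a \emph{strict} increase, and this is exactly why $W$ is chosen to contain the equality locus: $\gamma_{1-e}>0$ whenever $1-e\ne0$. If $e=0$ or $e=1$, the generator is simply $g$ or $f$. One might worry that the value $(0,\gamma_{1-e})$ is not comparable with the first-coordinate data of $v_x(g)$. This is harmless, since adding a positive element of the totally ordered group $\Delta$ always strictly increases a value.
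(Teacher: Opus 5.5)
Your proof is correct and follows the paper's argument. The paper also takes the clopen comparison set from Proposition~\ref{finitevalues} and uses $T_{1-e}$ as a lift of an idempotent with value zero on one side and positive value on the other, but it uses the one-sided generator $f+T_{1-e}g$ instead of your symmetric $fT_e+gT_{1-e}$. (Your separate treatment of $e\in\{0,1\}$ is not needed: $v_x(T_0)=\delta>0$, so the general computation already covers those cases.)
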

	
	\begin{proof}
		Take nonzero $f,g\in D$. By Proposition~\ref{finitevalues}, the comparison set
		\[
		U=\{x\in X:v_x(f)\le v_x(g)\}
		\]
		is clopen. Let $e\in R$ be its characteristic idempotent, and put $h=T_{1-e}\in D$. At $x\in U$, the residue of $h$ is zero and $v_x(h)>0$. At $x\notin U$, its residue is one and $v_x(h)=0$.
		
		Set $b=f+hg$. On $U$ we have $v_x(hg)>v_x(g)\ge v_x(f)$, while off $U$ we have $v_x(hg)=v_x(g)<v_x(f)$. For a valuation, unequal values in a sum give the smaller value: this follows by applying the valuation inequality also to $f=(f+hg)-hg$. Therefore
		\begin{equation}\label{minimum}
			v_x(b)=\min\{v_x(f),v_x(g)\}\quad(x\in X).
		\end{equation}
		In particular $b\ne0$, since $X\ne\varnothing$. The formula implies $f/b,g/b\in V_x$ for every $x$, and hence $f/b,g/b\in D$. Thus $(f,g)D\subseteq bD$. The reverse inclusion follows from $b=f+hg$. If one generator is zero, the ideal is already principal. Induction now handles every finite set of generators.
	\end{proof}
	
	\begin{proposition}\label{kernel}
		For $t=T_0$, one has
		\[
		\ker\rho=tD=\Jac(D).
		\]
		Moreover, for $f\in D$, the element $f$ is a unit if and only if $\rho(f)$ is a unit in $R$.
	\end{proposition}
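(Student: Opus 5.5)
Here is the plan: first settle the unit criterion, then show $\ker\rho=tD$, and finally derive $\Jac(D)=\ker\rho$ from the two.

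\emph{Unit criterion.} Let $f\in D$ with $f\neq0$. Then $f$ is a unit of $D$ exactly when $1/f\in V_x$ for every $x$, that is, when $v_x(f)=0$ for all $x\in X$. For an element of $V_x$, value zero is equivalent to $\rho_x(f)\neq0$. By Proposition~\ref{residuemap} we have $\rho_x(f)=\rho(f)+x$. So $f$ is a unit if and only if $\rho(f)\notin x$ for every maximal ideal $x$, which holds if and only if $\rho(f)$ is a unit of $R$. The case $f=0$ is trivial, provided $R\neq0$.

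\emph{The kernel $\ker\rho=tD$.} By Lemma~\ref{variables}, $\rho(t)=0$, so $tD\subseteq\ker\rho$. For the reverse inclusion, take $f\in\ker\rho$ with $f\neq0$. Then $\rho_x(f)=0$ for every $x$, so $v_x(f)>0$ for every $x$. Every $v_x$ takes values in $\Delta$, and $\delta$ is the least positive element of $\Delta$. Hence $v_x(f)\ge\delta=v_x(t)$, again by Lemma~\ref{variables}. This gives $v_x(f/t)\ge0$ at every $x$, so $f/t\in D$ and $f\in tD$. This step is the crux of the proof, and it needs only the common least positive value together with $v_x(t)=\delta$ uniformly in $x$. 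No clopen-partition argument is required.

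\emph{The radical $\Jac(D)=\ker\rho$.} Since $\rho$ is surjective, $D/\ker\rho\cong R$, and $\Jac(R)=0$. Every maximal ideal of $R$ pulls back to a maximal ideal of $D$, so $\Jac(D)\subseteq\rho^{-1}(\Jac(R))=\ker\rho$.

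For the other inclusion, take $f\in\ker\rho$ and $g\in D$. Then $\rho(1-fg)=1$, which is a unit of $R$, so $1-fg$ is a unit of $D$ by the unit criterion. By the standard characterization of the Jacobson radical, $f\in\Jac(D)$.

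The only step needing care is the kernel inclusion $\ker\rho\subseteq tD$, and it rests entirely on the minimality of $\delta$ in $\Delta$ recorded after \eqref{commonfield}.
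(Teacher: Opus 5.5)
Your proof is correct and follows essentially the same route as the paper: you get $\ker\rho\subseteq tD$ from the common least positive value $\delta=v_x(t)$, you detect units by value zero at every $x$, and you identify the radical through the maximal-ideal correspondence with $R$, where $\Jac(R)=0$. The only difference is the order of the steps. You prove the unit criterion first and reuse it to see that $1-fg$ is a unit, whereas the paper checks directly that $1-td$ has value zero at every point and records the unit criterion at the end.
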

	
	\begin{proof}
		For nonzero $f\in D$, the equality $\rho(f)=0$ is equivalent, by injectivity of $\iota$, to $\rho_x(f)=0$ for every $x$. This means $v_x(f)>0$ for every $x$. Since $\delta$ is the least positive element of $\Delta$ and $v_x(t)=\delta$, it is equivalent to
		\[
		v_x(f/t)=v_x(f)-\delta\ge0\quad\text{for every }x.
		\]
		By definition of $D$, this means $f/t\in D$. Including $f=0$ gives $\ker\rho=tD$.
		
		For $d\in D$, the element $td$ has positive value everywhere unless $d=0$. Hence $1-td$ has value zero everywhere, is nonzero, and has inverse in every $V_x$. It is therefore a unit in $D$. The Jacobson-radical criterion \cite[Proposition~1.9]{AtiyahMacdonald} gives $tD\subseteq\Jac(D)$. Conversely, every maximal ideal of $D$ contains $tD$, so the maximal-ideal correspondence for $D/tD\cong R$ gives
		\[
		\Jac(D)/tD=\Jac(R)=0.
		\]
		Thus equality holds.
		
		If $\rho(f)$ is a unit, its residue at every $x$ is nonzero. It follows that $v_x(f)=0$ everywhere and $f^{-1}\in D$. The converse holds for any unital homomorphism.
	\end{proof}
	
	\begin{proof}[Proof of Theorem~\ref{mainA}]
		Propositions~\ref{residuemap}, \ref{bezout}, and \ref{kernel} establish the quotient and ideal assertions. The field containment \eqref{fractionfield} gives characteristic zero and the specified fraction field. Since $R\ne0$, the ideal $tD$ is proper, and $t$ is a nonzero nonunit.
		
		We explain the weak-dimension argument. If $\mathfrak n$ is a maximal ideal of a B\'ezout domain, every two-generated ideal in $D_{\mathfrak n}$ is principal. For nonzero $a,b$ write $(a,b)=cD_{\mathfrak n}$ and $a=cr$, $b=cs$. Cancellation gives $(r,s)=D_{\mathfrak n}$. In a local ring, one of $r,s$ is a unit. Thus $a$ divides $b$ or $b$ divides $a$. This proves that $D_{\mathfrak n}$ is a valuation domain.
		
		We use the standard fact that torsion-free modules over valuation domains are flat \cite[Tag 0539]{Stacks}; more generally, torsion-free modules over Pr\"ufer domains are flat \cite[Theorem~4.35]{Rotman}. Here is the relation calculation in our local setting. For a relation $\sum_{i=1}^q a_i m_i=0$ in a torsion-free $V$-module, choose a nonzero coefficient $a_1$ dividing the others and write $a_i=a_1b_i$ for $i\ge2$. Cancellation gives
		\[
		m_1=-\sum_{j=2}^q b_jm_j.
		\]
		For $j\ge2$, put $y_j=m_j$, $c_{1j}=-b_j$, and $c_{ij}=\delta_{ij}$ for $i\ge2$. Then
		\[
		m_i=\sum_{j=2}^q c_{ij}y_j,\qquad
		\sum_{i=1}^q a_ic_{ij}=-a_1b_j+a_j=0.
		\]
		These equations verify the equational criterion for flatness \cite[Tag 00HK]{Stacks}. A relation with all coefficients zero satisfies the criterion by taking the identity coefficient matrix. For a one-term relation with nonzero coefficient, the element is zero and the empty family suffices. Since localization preserves torsion-freeness, and flatness is detected at maximal ideals \cite[Tag 00HT]{Stacks}, every torsion-free $D$-module is flat.
		
		For any $D$-module $M$, a free presentation $0\to L\to F\to M\to0$ has torsion-free kernel $L$, because $D$ is a domain. Thus $L$ is flat and $\fd_D M\le1$. On the other hand, multiplication by $t$ gives a free resolution
		\[
		0\longrightarrow D\xrightarrow{\ t\ }D\longrightarrow D/tD\longrightarrow0.
		\]
		Tensoring with $D/tD$ shows that
		$\Tor^D_1(D/tD,D/tD)\cong D/tD\ne0$. Hence $\fd_D(D/tD)=1$ and $\wgd D=1$.
		
		Finally, both outer rings in \eqref{fractionfield} have cardinality $\max\{\aleph_0,|R|\}$: each of their elements uses only finitely many indices and coefficients, and the polynomial ring contains all $T_a$ and all integers. The intermediate ring $D$ has the same cardinality.
	\end{proof}
	
	\begin{remark}
		The two ends of $\Delta=\Z\oplus_{\lex}H\oplus_{\lex}\Z$ have separate roles. The first component handles positive residue characteristic; a negative value there cannot be offset by any monomial weight. The last component supplies the common least positive value, which is what makes the kernel principal. A principal maximal ideal in the resulting local rings does not imply that they are discrete valuation rings.
	\end{remark}
	
	\subsection{Local structure and further quotients}
	
	\begin{proposition}\label{localizations}
		For $x\in X$, put $M_x=\rho^{-1}(x)$. These are exactly the maximal ideals of $D$, and
		\[
		D_{M_x}=V_x,\qquad V_x/tV_x\cong k_x.
		\]
		The induced map $\Max D\to\Max R$ is a homeomorphism.
	\end{proposition}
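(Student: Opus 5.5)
The argument has three parts: identify the maximal ideals, compute the localizations, and prove the homeomorphism.

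\emph{Maximal ideals.} Since $\rho:D\to R$ is surjective with kernel $tD=\Jac(D)$ (Proposition~\ref{kernel}), every maximal ideal of $D$ contains $tD$. The correspondence theorem therefore gives a bijection $x\mapsto M_x=\rho^{-1}(x)$ from $\Max R$ onto $\Max D$. For $f\in D$ we have $f\in M_x$ iff $\rho(f)\in x$ iff $\rho_x(f)=0$, by Proposition~\ref{residuemap}. This holds iff $v_x(f)>0$. So $M_x=D\cap\mathfrak m_{V_x}$, and hence $D_{M_x}\subseteq V_x$.

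\emph{Localizations.} For the reverse inclusion, take $f\in V_x$ and write $f=a/b$ with $a,b\in D$, using $\Frac(D)=K$. By Proposition~\ref{bezout}, the proof gives $c$ with $(a,b)D=cD$ and $v_y(c)=\min\{v_y(a),v_y(b)\}$ for all $y$. Replacing $a,b$ by $a/c,b/c\in D$, we may assume $\min\{v_y(a),v_y(b)\}=0$ for every $y$. Since $v_x(a/b)\ge0$, this forces $v_x(b)=0$, so $b\notin M_x$ and $f\in D_{M_x}$. Thus $D_{M_x}=V_x$.

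The isomorphism $V_x/tV_x\cong k_x$ then follows. Because $v_x(t)=\delta$ is the least positive element of $\Delta$, an element of $V_x$ has positive value iff it is divisible by $t$ in $V_x$. Hence $tV_x$ is the maximal ideal of $V_x$, and the quotient is the residue field $k_x$.

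\emph{Homeomorphism.} Let $\phi:\Max R\to\Max D$ denote the bijection above. Its inverse $M\mapsto\rho(M)$ is the restriction of the map $\Spec D\to\Spec R$ dual to $R\to$, or rather it is the restriction of the closed embedding $\Spec R\cong V(tD)\subseteq\Spec D$. This embedding is a homeomorphism onto $V(tD)$. Since $V(tD)\cap\Max D=\Max D$, restricting to maximal spectra with their subspace topologies gives the claim.

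Concretely, closed sets correspond as follows: $V(I)\cap\Max D$ corresponds to $V(\rho(I))\cap\Max R$, and every ideal of $R$ is of the form $\rho(I)$.

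\emph{Main obstacle.} The only substantive step is $V_x\subseteq D_{M_x}$, since an intersection of valuation rings does not in general localize back to its members. The key input is the simultaneous-minimum generator from the proof of Proposition~\ref{bezout}, which yields a reduced fraction with coprime values at all points at once. Everything else is formal.
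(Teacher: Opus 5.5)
Your proof is correct. Apart from one step it follows the paper. The paper also describes $\Max D$ via $tD=\Jac(D)$, gets $D_{M_x}\subseteq V_x$ from $M_x=D\cap\mathfrak m_{V_x}$, gets $\mathfrak m_{V_x}=tV_x$ from the least positive value $\delta$, and obtains the homeomorphism from $\Spec R\cong V(tD)$. You should delete the stray half-sentence about a map $\Spec D\to\Spec R$, since no such map exists in general; your self-correction to the closed embedding is the right statement.

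The difference is in the inclusion $V_x\subseteq D_{M_x}$.

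\emph{The paper's route.} It does not use the B\'ezout property here. For $0\ne f\in V_x$ it takes the clopen set $U=\{y:v_y(f)\ge0\}$ and puts $h=T_{1-e}$ for the idempotent $e$ of $U$. It then checks that $s=1/(1+hf)$ and $fs$ lie in $D$ with $v_x(s)=0$. This writes $f=(fs)/s$ with an explicit denominator built from $f$ alone.

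\emph{Your route.} You write $f=a/b$ using $\Frac(D)=K$. You then divide by a B\'ezout generator $c$ of $(a,b)D$, which puts the fraction in lowest terms at all points simultaneously. This is the classical argument that a valuation overring of a B\'ezout (or Pr\"ufer) domain is the localization of that domain at its center.

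You do not even need the minimum formula \eqref{minimum}. The equality $(a/c,b/c)D=D$ already prevents both elements from lying in $M_x$. Then $v_x(a/c)\ge v_x(b/c)$ forces $b/c\notin M_x$.

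\emph{Comparison.} Your version shows that $D_{M_x}=V_x$ is a formal consequence of Proposition~\ref{bezout} together with the identification of $M_x$ as the center of $v_x$. The paper's version is independent of the B\'ezout property and needs neither $\Frac(D)=K$ nor a prior fraction representation. It applies the clopen-set device with $T_{1-e}$ once, directly to $f$.
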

	
	\begin{proof}
		Every maximal ideal of $D$ contains $tD=\Jac(D)$. The quotient $D/tD\cong R$ therefore gives the asserted bijection and homeomorphism of maximal spectra. For $d\in D$, one has $d\notin M_x$ exactly when $\rho_x(d)\ne0$, or equivalently $v_x(d)=0$. All such $d$ are units in $V_x$, so $D_{M_x}\subseteq V_x$.
		
		For the reverse inclusion, take $0\ne f\in V_x$. The set
		\[
		U=\{y\in X:v_y(f)\ge0\}
		\]
		is clopen and contains $x$. Let $e$ be its characteristic idempotent and put $h=T_{1-e}$. On $U$ we have $v_y(h)>0$ and $v_y(f)\ge0$; off $U$ we have $v_y(h)=0$ and $v_y(f)<0$. Consequently $c=1+hf$ is nonzero and
		\[
		v_y(c)=
		\begin{cases}0,&y\in U,\\v_y(f),&y\notin U.\end{cases}
		\]
		It follows that $s=1/c\in D$ and $fs\in D$: their values on $U$ are $0,v_y(f)$, and their values off $U$ are $-v_y(f),0$, respectively. Since $x\in U$, $v_x(s)=0$, so $s\notin M_x$. Thus $f=(fs)/s\in D_{M_x}$.
		
		The maximal ideal of $V_x$ consists of the positive-value elements. The least-positive-value argument in Proposition~\ref{kernel}, now at the single point $x$, identifies it with $tV_x$. Its quotient is its residue field $k_x$.
	\end{proof}
	
	\begin{proposition}\label{nonnoetherian}
		The particular domain $D$ constructed in this section, and every $D_{M_x}$, are non-Noetherian. In fact, if $u=T_1-1$, then
		\[
		u/t^n\in D\quad(n\ge0),\qquad
		uD\subsetneq (u/t)D\subsetneq (u/t^2)D\subsetneq\cdots.
		\]
	\end{proposition}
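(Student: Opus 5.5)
The plan is to compute the values $v_x(u)$ directly with Lemma~\ref{variables}, compare them with $v_x(t^n)=n\delta$, and then read off strictness of the chain from the fact that $t$ is a nonunit. Throughout, $R\ne0$ (as in Theorem~\ref{mainA}), so $1\ne0$ in $R$ and the weight $\gamma_1=(e_1,0)\in\Gamma$ is defined with nonzero $H$-component.

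\emph{Step 1: the values of $u$.} Fix $x\in X$. Since $1\notin x$, the residue of $1$ in $k_x$ is $1$. If $\Char k_x=0$, the translation \eqref{translation} sends $u=T_1-1$ to $(1+Y_1)-1=Y_1$. By Lemma~\ref{monomial}, this gives $\omega_x(u)=\gamma_1$, so $v_x(u)=(0,\gamma_1)$ by \eqref{charzero}. If $\Char k_x=p>0$, then $u$ is a primitive integer polynomial, so $u_p(u)=0$ by Lemma~\ref{gauss}. Its Gauss reduction is $T_1-1$ over $\F_p$, which again translates to $Y_1$. Hence \eqref{composite} gives $v_x(u)=(0,\gamma_1)$ once more. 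Alternatively, one can read this off directly from \eqref{valueformula} with $c=1$, $P=u$, $Q=1$: here $A_\alpha(P)$ is nonzero only for $\alpha$ the exponent of $Y_1$, so $\lambda_P\equiv\gamma_1$.

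\emph{Step 2: $u/t^n\in D$.} By Lemma~\ref{variables}, $v_x(t^n)=n\delta=(0,(0,n))$. The element $\gamma_1-(0,n)=(e_1,-n)$ has positive first nonzero coordinate, so it is positive in $\Gamma$. As noted after \eqref{weights}, an element with nonzero positive $H$-component exceeds every element of $\{0\}\oplus\Z$. Therefore $v_x(u/t^n)>0$ for every $x$ and every $n\ge0$, and \eqref{intersection} gives $u/t^n\in D$. In particular each quotient lies in $tD=\ker\rho$, though we do not need this.

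\emph{Step 3: strictness.} We have $u/t^n=t\cdot(u/t^{n+1})$, so $(u/t^n)D\subseteq(u/t^{n+1})D$. Suppose equality held. Then $u/t^{n+1}=(u/t^n)d$ for some $d\in D$. Since $u\ne0$, cancelling gives $td=1$, so $t$ would be a unit. This contradicts Proposition~\ref{kernel}, since $tD$ is proper because $R\ne0$. Thus the chain is strictly ascending and $D$ is not Noetherian.

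\emph{Step 4: the localizations.} For $D_{M_x}$ the same chain works. By Proposition~\ref{localizations}, $D_{M_x}=V_x$, and $t$ is a nonunit there because $v_x(t)=\delta>0$. The identical cancellation argument in $V_x$ shows the chain is strict.

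The only step needing care is the comparison in Step 2. It depends on the lexicographic placement of $H$ before the final $\Z$ in $\Gamma$, so that no finite power of $t$ can reach $v_x(u)$.
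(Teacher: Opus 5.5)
Your proof is correct and matches the paper's argument essentially step for step. You compute $v_x(u)=(0,\gamma_1)$ via translation, with primitivity handling the Gauss step in positive characteristic. You then use the lexicographic placement of $H$ before the final $\Z$ to get $v_x(u/t^n)>0$, and obtain strictness by cancelling to $td=1$, which contradicts that $t$ is a nonunit in $D$ and in $V_x=D_{M_x}$.
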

	
	\begin{proof}
		At every point, translation sends $T_1-1$ to $Y_1$. In positive characteristic its integer polynomial is primitive, so no Gauss value is introduced. Hence $v_x(u)=(0,\gamma_1)$. Since $1_R\ne0_R$, the $H$-component of $\gamma_1$ is positive, and
		\[
		\gamma_1>n\gamma_0\quad\text{for every }n\ge0.
		\]
		Thus $v_x(u/t^n)>0$ for all $x$, proving membership in $D$. Each displayed ideal is contained in the next because $u/t^n=t(u/t^{n+1})$. If equality held at one step, then $u/t^{n+1}=(u/t^n)d$ for some $d\in D$. Cancellation in the field $K$ would give $d=t^{-1}$, contradicting that $t$ is a nonunit. The same argument in $V_x$, where $t$ is also a nonunit, proves that each maximal localization is non-Noetherian.
	\end{proof}
	
	This non-Noetherianity is a feature of the present construction, not an obstruction attached to every target. Section~\ref{sec-semisimple} determines when a finite product of fields admits a different construction with a PID source.
	
	\begin{corollary}\label{kalgebra}
		If $R\ne0$ is an absolutely flat algebra over a field $k$, then there are a B\'ezout $k$-domain $D_k$ and a nonzero nonunit $t\in D_k$ such that, as $k$-algebras,
		\[
		D_k/tD_k\cong R,\qquad \Jac(D_k)=tD_k,\qquad \wgd D_k=1.
		\]
	\end{corollary}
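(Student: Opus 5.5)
The plan is to build $D_k$ directly on top of the construction of this section, using the $k$-structure of $R$. Since $R\ne0$, the structure map $k\to R$ is injective. Every residue field $k_x$ is therefore a $k$-algebra, and all $k_x$ have characteristic $\Char k$. The simplest approach is to run the monomial-valuation construction over $k$ instead of over $\Q$. Put
\[
K_k=k(T_a:a\in R),
\]
and for each $x\in X$ use the embedding $k(T_a)\to k_x(Y_a)$, $T_a\mapsto\bar a+Y_a$. Here $k\to k_x$ is the structure map; injectivity follows exactly as in \eqref{translation}. The valuation $v_x$ is then the restriction $\omega_x$ of the monomial valuation of Lemma~\ref{monomial}, with values in $\Gamma$, and no Gauss step is needed. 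Set $D_k=\bigcap_x V_x$. The constants $k$ have value zero and residue their image in $k_x$, so $k\subseteq D_k$ and $D_k$ is a $k$-domain.

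Next I would redo the three key propositions with coefficients in $k$ in place of $\Z$. For $f=P/Q$ with $P,Q\in k[T_a]$ nonzero, translate inside $R[Y_a]$ via $k\to R$, giving coefficients $A_\alpha(P)\in R$. The analogue of Lemma~\ref{leadingpartition} holds: the reduction at $x$ is nonzero because $k\to k_x$ is injective and translation is an automorphism. Hence the lowest-weight exponent has clopen fibers. The value is then $v_x(f)=\lambda_P(x)-\lambda_Q(x)$, and Proposition~\ref{finitevalues} follows. The residue formula \eqref{residueformula} without the factor $c_R$ defines $\rho:D_k\to R$ with $\rho(T_a)=a$, and $\rho$ is $k$-linear since it is identity on constants; so $\rho$ is a surjective $k$-algebra map. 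The Bézout argument of Proposition~\ref{bezout} goes through verbatim, using $h=T_{1-e}$. The kernel argument of Proposition~\ref{kernel} also goes through verbatim, since $\gamma_0$ is the least positive element of $\Gamma$ and $v_x(T_0)=\gamma_0$. Together these give $\ker\rho=tD_k=\Jac(D_k)$. The weak dimension computation in the proof of Theorem~\ref{mainA} uses only the Bézout property and the resolution by $t$, so $\wgd D_k=1$.

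The main point to check is that nothing in the earlier proofs used characteristic zero or primitivity in an essential way. Primitivity and the Gauss valuation existed only to handle residue fields whose characteristic differs from that of the source. Over $k$ every residue field receives $k$, so the prime-field issue disappears, and the nonvanishing of reductions is automatic. I would also check that the residue identification in Lemma~\ref{monomial}, applied along the restriction, still gives the residue field $k_x$. This holds because $\rho_x(T_a)=\bar a$ ranges over all of $k_x$. Note that $D_k$ has characteristic $\Char k$, which the corollary allows, and is not of characteristic zero.
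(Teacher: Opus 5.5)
Your proposal is correct and follows essentially the same route as the paper. You work in $k(T_a:a\in R)$ with the translated monomial valuations valued in $\Gamma$ and no Gauss step. You obtain the clopen leading-term partitions with coefficients $A_\alpha(P)\in R$, use the residue formula \eqref{residueformula} without the factor $c_R$, rerun Propositions~\ref{bezout} and \ref{kernel} with least positive value $\gamma_0$, and keep the weak-dimension argument unchanged. Your remark that $D_k$ has characteristic $\Char k$, rather than zero, is consistent with the statement, which makes no characteristic-zero claim.
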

	
	\begin{proof}
		A unital map from $k$ to the ring $R$ is injective. Its composite with each residue map also embeds $k$ into $k_x$. Work in $k(T_a:a\in R)$ and use the translated monomial valuations with values in $\Gamma$. Constants from $k^\times$ have value zero. For a nonzero polynomial over $k$, its image over each $k_x$ is nonzero, and translation remains an automorphism. The proof of Lemma~\ref{leadingpartition} therefore gives finite clopen leading-term partitions directly, with coefficients in $R$.
		
		For a fraction $P/Q$ integral at every point, the residue is the element
		\[
		\sum_\alpha e^P_\alpha e^Q_\alpha
		A_\alpha(P)\bigl(A_\alpha(Q)\bigr)^\#\in R.
		\]
		The equal- and unequal-leading-weight cases in Proposition~\ref{residuemap} verify this formula. Variables again give surjectivity. Propositions~\ref{bezout} and \ref{kernel} apply with least positive value $\gamma_0$, and the weak-dimension proof is unchanged. The maps fix $k$, so the resulting quotient is a $k$-algebra isomorphism.
	\end{proof}
	
	\begin{example}\label{products}
		For any nonempty family of fields $(k_i)_{i\in J}$, the ring $R=\prod_{i\in J}k_i$ is von Neumann regular: define $a^\#$ coordinatewise by inversion at the nonzero coordinates and by zero at the others. Theorem~\ref{mainA} therefore applies to arbitrary products, including products with different residue characteristics.
		
		One must use all of $\Max R$, not just the kernels of the coordinate projections. For example, fix a prime $p$ and let $R=\prod_{n\ge1}\F_{p^n}$. A nonprincipal ultrafilter $\mathcal U$ on $\N_{>0}$ gives a surjective map to the ultraproduct field and hence a maximal ideal. To see directly that the ultraproduct is a field, represent a nonzero class by a sequence nonzero on a set in $\mathcal U$, and invert on that set. Choose $a_n\in\F_{p^n}$ of degree $n$ over $\F_p$. If $0\ne F\in\F_p[Z]$, then $F(a_n)\ne0$ for $n>\deg F$. Thus the class of $(a_n)$ is transcendental over $\F_p$ in the ultraproduct. The construction makes no algebraicity assumption on these additional residue fields.
	\end{example}
	
	The weak-dimension dichotomy for these principal quotients is a special case of \cite[Theorem~4]{Sorokin}. For the powers of our distinguished element, the following direct calculation also identifies nonzero Tor groups in every positive degree.
	
	\begin{proposition}\label{higherpowers}
		For $n\ge2$, let $A_n=D/t^nD$. Then
		\[
		\Jac(A_n)=tD/t^nD,\qquad
		A_n/\Jac(A_n)\cong R,\qquad \wgd A_n=\infty.
		\]
		The nilpotence index of $\Jac(A_n)$ is exactly $n$.
	\end{proposition}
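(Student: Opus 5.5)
The plan is to derive the first two assertions from Proposition~\ref{kernel} and then use the element $t$ to produce nonvanishing Tor groups. Because $t^nD\subseteq tD=\Jac(D)$, every maximal ideal of $D$ contains $t^nD$. The maximal ideals of $A_n$ are therefore the images of those of $D$. It follows that
\[
\Jac(A_n)=\Jac(D)/t^nD=tD/t^nD,
\]
and $A_n/\Jac(A_n)\cong D/tD\cong R$ by Proposition~\ref{residuemap}. For the nilpotence index, note that $(tD/t^nD)^n=t^nD/t^nD=0$. We also have $t^{n-1}\notin t^nD$, since otherwise $t^{-1}\in D$, contradicting that $t$ is a nonunit. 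Hence $(\Jac A_n)^{n-1}\ne0$, and the index is exactly $n$.

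For $\wgd A_n=\infty$ I will compute $\Tor^{A_n}_i(A_n/tA_n,\,A_n/tA_n)$ by an explicit periodic free resolution. Write $\bar t$ for the image of $t$. Then
\[
\cdots\xrightarrow{\bar t^{\,n-1}}A_n\xrightarrow{\bar t}A_n\xrightarrow{\bar t^{\,n-1}}A_n\xrightarrow{\bar t}A_n\longrightarrow A_n/\bar tA_n\longrightarrow0
\]
is exact. This amounts to showing $\operatorname{Ann}_{A_n}(\bar t)=\bar t^{\,n-1}A_n$ and $\operatorname{Ann}_{A_n}(\bar t^{\,n-1})=\bar tA_n$. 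Both follow from cancellation in the domain $D$. If $td\in t^nD$, then $d\in t^{n-1}D$. If $t^{n-1}d\in t^nD$, then $d\in tD$. The reverse inclusions are clear.

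Tensoring with $A_n/\bar tA_n\cong R$ kills both maps, since $n\ge2$ means $n-1\ge1$. Hence $\Tor_i^{A_n}(R,R)\cong R\ne0$ for every $i\ge1$, so $\fd_{A_n}R=\infty$ and $\wgd A_n=\infty$. This also gives the nonzero Tor groups in every positive degree mentioned before the statement.

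No step is really an obstacle. The only point needing care is exactness of the periodic complex, which reduces to the annihilator computations, that is, to cancellation in $D$ and the fact that $t$ is not a unit. Alternatively, one could invoke Sorokin's dichotomy: $A_n$ is not von Neumann regular because it has nonzero nilpotents, so its weak global dimension must be infinite. The direct resolution is self-contained, so I would present that.
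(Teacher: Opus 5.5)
Your proposal is correct and takes essentially the same approach as the paper. Both use the maximal-ideal correspondence for the radical, cancellation in $D$ for the nilpotence index and for the two annihilator identities, and the alternating $\bar t,\bar t^{\,n-1}$ free resolution, whose differentials vanish after tensoring, so that $\Tor^{A_n}_i(R,R)\cong R$ for all $i\ge1$. One small citation point: the isomorphism $D/tD\cong R$ also uses Proposition~\ref{kernel} ($\ker\rho=tD$), not only Proposition~\ref{residuemap}.
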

	
	\begin{proof}
		Since $t^nD\subseteq\Jac(D)=tD$, the maximal-ideal correspondence gives the radical and residue-quotient assertions. The radical has zero $n$th power. Its $(n-1)$st power is nonzero, since $t^{n-1}\in t^nD$ would imply $1\in tD$ by cancellation.
		
		Let $\tau=t+t^nD$ and $M=A_n/\tau A_n\cong R$. Cancellation in $D$ gives
		\[
		\operatorname{ann}_{A_n}(\tau)=\tau^{n-1}A_n,
		\qquad \operatorname{ann}_{A_n}(\tau^{n-1})=\tau A_n.
		\]
		For example, $td\in t^nD$ is equivalent to $d\in t^{n-1}D$; the other equality is proved in the same way. We therefore have the exact free resolution with alternating differentials
		\[
		\cdots\xrightarrow{\ \tau\ } A_n
		\xrightarrow{\ \tau^{n-1}\ } A_n
		\xrightarrow{\ \tau\ } A_n\longrightarrow M\longrightarrow0.
		\]
		After tensoring with $M$, every differential is zero. Thus
		$\Tor^{A_n}_i(M,M)\cong M\ne0$ for every $i\ge1$. This proves $\fd_{A_n}M=\infty$ and hence the weak-dimension assertion.
	\end{proof}
	
	\Needspace{22\baselineskip}
	\section{A characterization for semisimple rings}\label{sec-semisimple}
	
	Fix fields $k_1,\ldots,k_n$, with $n\ge1$, and put
	\[
	R=\prod_{i=1}^n k_i,\qquad \kappa_i=|k_i|,
	\qquad \mu=\max\{\aleph_0,\kappa_1,\ldots,\kappa_n\}.
	\]
	We first prove the cardinal bound needed for necessity. We then prepare the lifts used for sufficiency. The only existence theorem required for the coefficient rings is the existence of Cohen rings for arbitrary fields of positive characteristic.
	
	\subsection{The cardinal obstruction}
	
	The next bound is the Dedekind-domain case of Kearnes--Oman \cite[Lemma~2.1]{KearnesOman}. We give the shorter DVR proof in the form needed here.
	
	\begin{lemma}[Kearnes--Oman, Dedekind case]\label{cardinality}
		Let $A$ be a Dedekind domain and $\mathfrak m$ a nonzero maximal ideal. If $\kappa=|A/\mathfrak m|$, then
		\[
		|A|\le\kappa^{\aleph_0}.
		\]
		In particular, a finite residue field forces $|A|\le2^{\aleph_0}$.
	\end{lemma}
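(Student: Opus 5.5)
We will prove this by passing to the localization $A_{\mathfrak m}$ and then to its completion; the aim is an injection $A\hookrightarrow\widehat{A_{\mathfrak m}}$ together with a count of the completion in terms of $\kappa$. Since $A$ is Dedekind and $\mathfrak m\ne0$, the ring $V=A_{\mathfrak m}$ is a discrete valuation ring with residue field $A/\mathfrak m$ of cardinality $\kappa$. The localization map $A\to V$ is injective because $A$ is a domain. So it suffices to show $|V|\le\kappa^{\aleph_0}$.

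Fix a uniformizer $\pi$ of $V$. Krull's intersection theorem, or directly the discreteness of the valuation, gives
\[
\bigcap_{m\ge0}\pi^mV=0.
\]
Hence the natural map $V\to\varprojlim_m V/\pi^mV$ is injective. We then bound each finite level: $V/\pi^mV$ has a filtration by the subquotients $\pi^jV/\pi^{j+1}V$ for $0\le j<m$. Each of these is a one-dimensional vector space over the residue field, because multiplication by $\pi^j$ gives an isomorphism $V/\pi V\cong\pi^jV/\pi^{j+1}V$. Therefore $|V/\pi^mV|=\kappa^m\le\kappa^{\aleph_0}$, and for finite $\kappa$ this count is finite.

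The inverse limit embeds into the product
\[
\prod_{m\ge1}V/\pi^mV,
\]
whose cardinality is at most $(\kappa^{\aleph_0})^{\aleph_0}=\kappa^{\aleph_0}$. Concretely, one can also fix a set $S$ of representatives of $V/\pi V$ in $V$. Every element of the completion then has a unique $\pi$-adic expansion $\sum_j s_j\pi^j$ with $s_j\in S$, which gives an injection of the completion into $S^{\N}$ and hence the bound $\kappa^{\aleph_0}$ directly. Combining these steps,
\[
|A|\le|V|\le\bigl|\textstyle\prod_m V/\pi^mV\bigr|\le\kappa^{\aleph_0}.
\]

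The final assertion is the case of finite $\kappa\ge2$, where $\kappa^{\aleph_0}=2^{\aleph_0}$. There is no real obstacle in this argument; the only point needing care is the injectivity of $A\to\widehat{A_{\mathfrak m}}$. This rests on two facts used above: $A$ is a domain, so it embeds in $V$, and $\bigcap_m\pi^mV=0$ holds in the DVR $V$.
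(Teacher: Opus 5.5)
Your proposal is correct and follows essentially the same route as the paper: localize at $\mathfrak m$ to get a DVR $V$, use $\bigcap_{m}\pi^mV=0$ to embed $A\hookrightarrow V\hookrightarrow\prod_{m\ge1}V/\pi^mV$, and count $|V/\pi^mV|=\kappa^m$ via the isomorphisms $V/\pi V\cong\pi^jV/\pi^{j+1}V$. Your passage through the inverse limit and the $\pi$-adic digit expansion is an optional detour, since the paper embeds directly into the product.
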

	
	\begin{proof}
		The ring $V=A_{\mathfrak m}$ is a DVR, with residue field $A/\mathfrak m$; see \cite[Tag 034X]{Stacks}. Let $\pi$ be its uniformizer. Multiplication by $\pi^r$ identifies $V/\pi V$ with $\pi^rV/\pi^{r+1}V$. It follows inductively from the quotient maps that
		\[
		|V/\pi^rV|=\kappa^r\qquad(r\ge1),
		\]
		where for infinite $\kappa$ the expression $\kappa^r$ is simply $\kappa$.
		Every nonzero element of $V$ has finite valuation, so $\bigcap_{r\ge1}\pi^rV=0$. Hence
		\[
		A\hookrightarrow V\hookrightarrow\prod_{r\ge1}V/\pi^rV.
		\]
		Taking cardinalities gives
		\[
		|A|\le\prod_{r\ge1}\kappa^r
		=\kappa^{\sum_{r\ge1}r}
		=\kappa^{\aleph_0}.
		\]
		For finite $\kappa\ge2$, the last cardinal is $2^{\aleph_0}$.
	\end{proof}
	
	Suppose a Dedekind domain $A$ maps onto $R$. If $A$ is a field, the nonzero quotient is itself a field, so $n=1$ and the cardinal conditions of Theorem~\ref{mainB} are automatic. Otherwise the kernels $\mathfrak m_i$ of the coordinate maps $A\to k_i$ are nonzero maximal ideals. Indeed, a zero kernel for one coordinate would identify $A$ with that field. Lemma~\ref{cardinality} gives
	\begin{equation}\label{necessary-bound}
		\kappa_j\le |A|\le\kappa_i^{\aleph_0}
		\qquad(1\le i,j\le n).
	\end{equation}
	This proves condition (4). Raising its inequalities to the power $\aleph_0$ gives
	\[
	\kappa_j^{\aleph_0}\le
	(\kappa_i^{\aleph_0})^{\aleph_0}
	=\kappa_i^{\aleph_0}.
	\]
	Interchanging $i,j$ proves condition (3). Conversely, (3) implies (4), since $\kappa_j\le\kappa_j^{\aleph_0}$.
	
	\subsection{Complete DVRs and independent lifts}
	
	\begin{lemma}\label{completeDVR}
		For every field $k$, there is a characteristic-zero complete DVR $C$ with residue field $k$. If $\pi$ is a uniformizer, then
		\[
		|C|=|k|^{\aleph_0},\qquad
		|a+\pi^rC|=|k|^{\aleph_0}
		\quad(a\in C,\ r\ge1).
		\]
	\end{lemma}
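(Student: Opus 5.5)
Existence splits by characteristic. If $\Char k=0$, take $C=k[[X]]$, the power series ring, which is a complete DVR of characteristic zero with residue field $k$ and uniformizer $X$. If $\Char k=p>0$, invoke the existence of Cohen rings, as announced at the start of this section: there is a complete DVR $C$ with maximal ideal $pC$ and residue field $k$. It has characteristic zero, since $p$ is a nonzerodivisor, being the uniformizer of a domain. In both cases $C$ is a complete discrete valuation ring with residue field $k$, and only the cardinalities remain.

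For the cardinality of $C$, use completeness: the canonical map $C\to\varprojlim_r C/\pi^rC$ is an isomorphism. The upper bound $|C|\le\prod_{r\ge1}|C/\pi^rC|=\kappa^{\aleph_0}$, with $\kappa=|k|$, is exactly the computation in Lemma~\ref{cardinality}, using $|C/\pi^rC|=\kappa^r$. For the lower bound, fix a set $S\subseteq C$ of representatives of $k$ containing $0$. Every sequence $(s_j)_{j\ge0}$ in $S^{\N}$ gives a convergent series $\sum_j s_j\pi^j$. Distinct sequences give distinct sums: if the first index of disagreement is $j_0$, the difference has $\pi$-adic value exactly $j_0$, because $s_{j_0}-s'_{j_0}$ is a unit. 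Hence $|C|\ge\kappa^{\aleph_0}$. When $k$ is finite this reads $2^{\aleph_0}\le|C|\le 2^{\aleph_0}$, since $\kappa\ge2$.

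For the cosets, translation by $a$ is a bijection $C\to a+\pi^rC$, and multiplication by $\pi^r$ is a bijection $C\to\pi^rC$ because $C$ is a domain. Thus $|a+\pi^rC|=|C|=\kappa^{\aleph_0}$. The injective-series argument also yields the lower bound directly inside the coset, by considering $a+\sum_{j\ge r}s_j\pi^j$.

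The only nontrivial input is the existence of Cohen rings in positive characteristic. That is a classical theorem (Cohen; see also the structure theory of complete local rings) and is to be cited rather than reproved. Everything else is routine. I expect no real obstacle beyond making sure the uniformizer in the mixed-characteristic case is $p$, or at least that $C$ is a DVR, so that the series representation applies.
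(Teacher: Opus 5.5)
Your proof is correct and follows the paper's route: $k[[X]]$ or a Cohen ring with uniformizer $p$ for existence, the digit series $\sum_j s_j\pi^j$ for $|C|$, and $c\mapsto a+\pi^r c$ for the balls. The only variation is that you get the upper bound $|C|\le\kappa^{\aleph_0}$ from the embedding $C\hookrightarrow\prod_r C/\pi^rC$ of Lemma~\ref{cardinality}, while the paper shows every element has a digit expansion; both are valid. For characteristic zero, you should also note that the surjection $C\to k$ forces $\Char C\in\{0,p\}$, which is why $p\neq 0$ suffices.
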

	
	\begin{proof}
		If $\Char k=0$, use $C=k[[u]]$ and $\pi=u$. If $\Char k=p>0$, use a Cohen ring with residue field $k$ and uniformizer $p$. This is the existence theorem for Cohen rings \cite{Cohen}; the precise arbitrary-residue-field statement is \cite[Tag 0328]{Stacks}. In particular, no perfectness assumption is imposed on $k$. It has characteristic zero, since $p$ is a nonzerodivisor and all integers prime to $p$ are units.
		
		Choose a set $S\subseteq C$ of representatives for $C/\pi C$, with $0\in S$. Every $c\in C$ has an expansion
		\[
		c=s_0+s_1\pi+s_2\pi^2+\cdots,\qquad s_j\in S.
		\]
		Existence follows by choosing the successive residues and dividing each error by $\pi$; completeness gives the limit. For uniqueness, if two digit sequences first differ at index $r$, their difference has valuation exactly $r$, so their sums cannot agree. Conversely, every digit sequence converges. Thus $C$ is in bijection with $S^{\N}$, and $|C|=|k|^{\aleph_0}$. Multiplication by $\pi^r$ followed by translation by $a$ is a bijection from $C$ onto $a+\pi^rC$, proving the second assertion.
	\end{proof}
	
	\begin{lemma}\label{independent-lifts}
		Let $C$ be a characteristic-zero complete DVR, let $\pi$ be a uniformizer, and put $M=|C|$. Let $\lambda$ be an infinite cardinal with $\lambda\le M$. For each $\alpha<\lambda$, prescribe a nonempty ball
		\[
		B_\alpha=a_\alpha+\pi^{r_\alpha}C,
		\qquad a_\alpha\in C,\quad r_\alpha\ge1.
		\]
		There are $x_\alpha\in B_\alpha$ such that the family $(x_\alpha)_{\alpha<\lambda}$ is algebraically independent over $\Q$.
	\end{lemma}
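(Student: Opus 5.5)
We choose the $x_\alpha$ by transfinite recursion on $\alpha<\lambda$. Suppose $(x_\beta)_{\beta<\alpha}$ has been chosen and is algebraically independent over $\Q$. Let $F_\alpha$ be the algebraic closure of $\Q(x_\beta:\beta<\alpha)$ inside the field $\Frac(C)$; this makes sense because $C$ has characteristic zero, so $\Q\subseteq\Frac(C)$. Every element of $C\setminus F_\alpha$ is transcendental over $\Q(x_\beta:\beta<\alpha)$. Adjoining such an element preserves algebraic independence: a polynomial relation over $\Q$ among $x_\beta$ ($\beta<\alpha$) and the new element would make the new element algebraic over $\Q(x_\beta:\beta<\alpha)$, unless the relation were trivial in the new variable, and a relation trivial in the new variable contradicts the inductive hypothesis. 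It therefore suffices to show that $B_\alpha\not\subseteq F_\alpha$, and then to pick $x_\alpha\in B_\alpha\setminus F_\alpha$. At limit stages nothing needs to be done, since algebraic independence is a finitary condition: any relation involves only finitely many of the $x_\beta$, and these all lie in some earlier stage.

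The key step is the cardinality count. The field $\Q(x_\beta:\beta<\alpha)$ has cardinality at most $\max\{\aleph_0,|\alpha|\}$. Each element of $F_\alpha$ is a root of one of the nonzero polynomials over that field, and each such polynomial has finitely many roots. Hence
\[
|F_\alpha|\le\max\{\aleph_0,|\alpha|\}<\lambda\quad\text{when }\lambda>\aleph_0\text{ or }\alpha\text{ is finite.}
\]
On the other hand, Lemma~\ref{completeDVR} gives $|B_\alpha|=|C|=M\ge\lambda$. Here I would use that a characteristic-zero complete DVR with residue field $k$ has cardinality $|k|^{\aleph_0}$. The lemma is stated for the specific $C$ constructed there, but its digit-expansion argument works for any complete DVR, so it applies to the $C$ in the present statement. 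Thus $|F_\alpha|<|B_\alpha|$ whenever $|\alpha|<\lambda$, which holds for every $\alpha<\lambda$ as long as $\lambda$ is regarded as an initial ordinal.

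The main obstacle is the case $\lambda=\aleph_0$ with $\alpha$ infinite; in fact this case does not arise, since $\alpha<\omega$ is finite and so $F_\alpha$ is countable. The remaining concern is that $M$ might equal $\aleph_0$. This cannot happen: $M=|k|^{\aleph_0}\ge2^{\aleph_0}>\aleph_0$. Hence in every case
\[
|F_\alpha|\le\max\{\aleph_0,|\alpha|\}<\max\{\aleph_0^{+},\lambda\}\le M,
\]
where the middle inequality uses $|\alpha|<\lambda$ when $\lambda$ is uncountable and $\max\{\aleph_0,|\alpha|\}=\aleph_0<2^{\aleph_0}$ when $\lambda=\aleph_0$. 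Therefore $B_\alpha\setminus F_\alpha\ne\varnothing$ at every stage, the recursion never stops, and $(x_\alpha)_{\alpha<\lambda}$ is algebraically independent over $\Q$ with $x_\alpha\in B_\alpha$ for each $\alpha$.
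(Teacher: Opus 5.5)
Your proposal is correct and follows the paper's proof: you use transfinite recursion, choosing $x_\alpha\in B_\alpha$ outside the set of elements algebraic over $\Q(x_\beta:\beta<\alpha)$, which has size at most $\max\{\aleph_0,|\alpha|\}$, while $|B_\alpha|=M\ge 2^{\aleph_0}$ by the digit expansion. One intermediate claim is wrong: $\max\{\aleph_0,|\alpha|\}<\lambda$ fails for finite $\alpha$ when $\lambda=\aleph_0$. The comparison you actually need is with $M$ directly, as in your final display and in the paper, which uses only $|\alpha|<\lambda\le M$ and $M>\aleph_0$.
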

	
	\begin{proof}
		By Lemma~\ref{completeDVR} and its digit calculation, $M\ge2^{\aleph_0}>\aleph_0$, and every $B_\alpha$ has cardinality $M$. Suppose the choices have been made for $\beta<\alpha$. The field
		\[
		F_\alpha=\Q(x_\beta:\beta<\alpha)
		\]
		has cardinality at most $\max\{\aleph_0,|\alpha|\}$. The set of elements of $\Frac(C)$ algebraic over $F_\alpha$ has at most that cardinality: there are at most that many polynomials, and each nonzero polynomial has finitely many roots. Since $\alpha<\lambda\le M$ and $M$ is uncountable,
		\[
		\max\{\aleph_0,|\alpha|\}<M.
		\]
		Choose $x_\alpha\in B_\alpha$ outside this algebraic set. Transfinite recursion gives the desired family. At a successor stage the new element is transcendental over its predecessors; at a limit stage every polynomial relation would already involve finitely many earlier choices. Thus the entire family is algebraically independent. The argument uses no regularity assumption on the cardinal $M$.
	\end{proof}
	
	\subsection{A finite intersection with prescribed residues}
	
	We isolate the ideal calculation. It explains why the construction below gives a PID, rather than merely an intersection of valuation rings.
	
	\begin{lemma}\label{finite-intersection}
		Let $K$ be a field with valuations $v_i:K^\times\to\Z$, $1\le i\le n$, whose valuation rings $V_i$ have residue fields $k_i$. Suppose there are $z_1,\ldots,z_n\in K^\times$ with
		\begin{equation}\label{separating-values}
			v_i(z_j)=\begin{cases}1,&i=j,\\0,&i\ne j.\end{cases}
		\end{equation}
		Put $D=\bigcap_i V_i$, and suppose the product residue map
		\[
		\rho:D\longrightarrow\prod_i k_i
		\]
		is surjective. Then $D$ is a semilocal PID. If $t=\prod_i z_i$ and $\mathfrak n_i=\{d\in D:v_i(d)>0\}$, then
		\[
		\ker\rho=tD=\Jac(D),\qquad
		\Max D=\{\mathfrak n_1,\ldots,\mathfrak n_n\},
		\qquad D_{\mathfrak n_i}=V_i,
		\]
		and $\Frac(D)=K$.
	\end{lemma}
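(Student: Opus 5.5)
The plan is to show that every nonzero ideal of $D$ is generated by a monomial $z^m=\prod_i z_i^{m_i}$, with $m_i\ge0$. First, since $v_i(z_j)\ge0$ for all $i,j$, each $z_j$ lies in $D$. For $f\in K^\times$, set $m_i(f)=v_i(f)$ and $u=f/z^{m(f)}$. By \eqref{separating-values}, $v_i(z^{m})=m_i$, so $v_i(u)=0$ for all $i$. Hence $u$ and $u^{-1}$ lie in every $V_i$, and so $u\in D^\times$. Thus every $f\in K^\times$ is a unit of $D$ times a monomial $z^m$ with $m\in\Z^n$. In particular $K=\Frac(D)$, since each $z_i^{-1}$ is a quotient of elements of $D$. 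Moreover, $f\in D$ exactly when all $v_i(f)\ge0$, and for $f,g\in D$ nonzero we have $g\in fD$ if and only if $v_i(g)\ge v_i(f)$ for all $i$.

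For a nonzero ideal $I$, put $m_i=\min_{g\in I}v_i(g)$, which is a finite nonnegative integer. The key step is to find a single $b\in I$ with $v_i(b)=m_i$ for every $i$. Once we have it, every $g\in I$ satisfies $v_i(g)\ge v_i(b)$, so $I=bD$ by the divisibility criterion.

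To produce $b$, I will use the surjectivity of $\rho$, and I expect this to be the main step. Choose $g_i\in I$ with $v_i(g_i)=m_i$. By surjectivity, there are $e_i\in D$ with $\rho(e_i)$ equal to the $i$th standard idempotent of $\prod k_j$. Thus $v_i(e_i)=0$, and $v_j(e_i)>0$ for $j\ne i$. To make the off-diagonal values large, replace $e_i$ by $e_i z^{N}$, where $z^N=\prod_{j\ne i}z_j^{N}$ for large $N$. This keeps $v_i$ unchanged and makes $v_j$ as large as desired for $j\ne i$. Set $b=\sum_i e_i z^{(i)} g_i\in I$. At place $j$, the $j$th summand has value $m_j$, and all other summands have value greater than $m_j$ once $N$ exceeds $\max_i m_i$. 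So $v_j(b)=m_j$. This shows that $D$ is a PID. Since $D$ is not a field unless the statement is trivial, it is then Dedekind.

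For the ideal-theoretic conclusions, proceed as follows.
\begin{enumerate}
\item The kernel: $d\in\ker\rho$ iff $v_i(d)>0$ for all $i$, iff $v_i(d)\ge1=v_i(t)$ for all $i$, iff $d\in tD$.
\item The maximal ideals: $\mathfrak n_i$ is the kernel of the composite of $\rho$ with the projection to $k_i$. This composite is surjective, so $\mathfrak n_i$ is maximal with $D/\mathfrak n_i\cong k_i$. The ideals $\mathfrak n_i$ are distinct because $z_i\in\mathfrak n_i$ while $z_i\notin\mathfrak n_j$ for $j\ne i$.
\item No other maximal ideals: if $\mathfrak m$ avoided every $\mathfrak n_i$, then prime avoidance would give $d\in\mathfrak m$ with all $v_i(d)=0$. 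Such a $d$ is a unit, a contradiction.
\item The radical: it follows that $\Jac(D)=\bigcap\mathfrak n_i=\ker\rho=tD$.
\item The localizations: $D_{\mathfrak n_i}\subseteq V_i$ is clear. Conversely, any $f\in V_i$ can be written as $u\,z^{m}$ with $m_i\ge0$. Write $z^m=z^{m^+}/z^{m^-}$, where $m^-$ is supported off $i$. Then the denominator lies in $D\setminus\mathfrak n_i$, so $f\in D_{\mathfrak n_i}$.
\end{enumerate}
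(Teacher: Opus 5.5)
Your proof is correct, but it is organized differently from the paper's.

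\textbf{The paper's route.} It first proves $\ker\rho=tD$. It then gets $tD\subseteq\Jac(D)$ because every $1-ta$ has all values zero. Next it uses surjectivity of $\rho$, so that $D/tD\cong\prod_i k_i$ has zero radical, to conclude $\Jac(D)=tD$ and to read off $\Max D=\{\mathfrak n_1,\ldots,\mathfrak n_n\}$ from the correspondence theorem. Only then does it prove principality: for $b=\prod_i z_i^{m_i}$, the ideal $b^{-1}J$ lies in no $\mathfrak n_i$, hence equals $D$.

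\textbf{Your route.} You start from the factorization $K^\times=D^\times\cdot\{z^m:m\in\Z^n\}$. This gives $\Frac(D)=K$, the divisibility criterion, and the localization statement; your denominator $z^{m^-}$ is exactly the paper's $s$. You then prove principality directly, by producing an element of $I$ that attains all the minima $m_i$ at once, before knowing the maximal ideals. Finally $\Max D$ comes from prime avoidance rather than from the radical. The paper's route gets the monomial generator in one line once $\Max D$ is known. Yours makes the full ideal structure explicit: the nonzero ideals are exactly the $z^mD$ with $m\in\N^n$.

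\textbf{A further payoff you did not exploit.} The idempotent lifts $e_i$ are unnecessary.
\begin{enumerate}
\item The element $\prod_{j\ne i}z_j$ already has $i$th value $0$ and $j$th value $1$ for $j\ne i$.
\item Since $v_j(g_i)\ge m_j$ for all $i,j$, any exponent $N\ge1$ works. No comparison with $\max_i m_i$ is needed.
\item Prime avoidance only needs the $\mathfrak n_i$ to be prime. They are, as contractions of the maximal ideals of the $V_i$, and the $z_i$ make them pairwise incomparable.
\end{enumerate}
So your argument yields the PID property, $\Max D$, and $D_{\mathfrak n_i}=V_i$ without using surjectivity of $\rho$. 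Surjectivity then follows from $D_{\mathfrak n_i}=V_i$ and the Chinese remainder theorem. In this sense your route shows that the surjectivity hypothesis is a consequence of the separating elements. In the paper's proof, by contrast, surjectivity is exactly what identifies $\Max D$.
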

	
	\begin{proof}
		Each $z_j$ belongs to $D$, and $v_i(t)=1$ for every $i$. For nonzero $d\in D$, its residue is zero at every place precisely when $v_i(d)\ge1$ for every $i$. This is equivalent to $d/t\in D$. Hence $\ker\rho=tD$.
		
		An element of $D$ is a unit precisely when all its values are zero: in that case its inverse lies in every $V_i$ and hence in $D$. For every $a\in D$, the element $1-ta$ has value zero everywhere, so it is a unit. Therefore $tD\subseteq\Jac(D)$. Since $D/tD\cong\prod_i k_i$ has zero Jacobson radical, equality follows. The maximal-ideal correspondence now gives exactly the $n$ distinct ideals $\mathfrak n_i$ in the statement.
		
		Take a nonzero ideal $J$ of $D$. For each $i$, the set of values of its nonzero elements is a nonempty subset of $\N$. Let
		\[
		m_i=\min\{v_i(f):0\ne f\in J\},\qquad
		b=\prod_i z_i^{m_i}.
		\]
		Then $v_i(b)=m_i$, so $f/b\in D$ for every $f\in J$. Consequently $L=b^{-1}J$ is an ideal of $D$. Choose $f_i\in J$ with $v_i(f_i)=m_i$. The element $f_i/b$ has $i$-th value zero, so $L\nsubseteq\mathfrak n_i$. As these are all maximal ideals, $L=D$. Thus $J=bD$. The zero ideal is principal as well, and $D$ is a PID.
		
		For $f\in K^\times$, choose $N\ge0$ such that $v_i(f)+N\ge0$ for every $i$. Then $ft^N\in D$, proving $K=\Frac(D)$. Finally, $D_{\mathfrak n_i}\subseteq V_i$, because all denominators outside $\mathfrak n_i$ have $i$-th value zero. For $f\in V_i\setminus\{0\}$ set
		\[
		s=\prod_{j\ne i}z_j^{\max\{0,-v_j(f)\}}.
		\]
		Equation~\eqref{separating-values} gives $s\in D\setminus\mathfrak n_i$ and $sf\in D$. Hence $f=(sf)/s\in D_{\mathfrak n_i}$. This proves the localization assertion.
	\end{proof}
	
	\begin{proof}[Proof of Theorem~\ref{mainB}]
		The implication (1)$\Rightarrow$(4), and the equivalence of (3) and (4), were proved in \eqref{necessary-bound} and the following paragraph. Condition (2) implies (1), since a PID is Dedekind. It remains to construct the source under condition (3), including the stated cardinality.
		
		Write $M=\kappa_i^{\aleph_0}$ and fix any cardinal $\lambda$ with $\mu\le\lambda\le M$. For each $i$, choose a characteristic-zero complete DVR $C_i$ with uniformizer $\pi_i$ and residue field $k_i$, as in Lemma~\ref{completeDVR}. Thus $|C_i|=M$ for every $i$.
		
		Let $\mathcal U$ be a set of $\lambda$ distinct variables containing the distinguished variables
		\[
		\{T_a:a\in R\}\quad\text{and}\quad Z_1,\ldots,Z_n.
		\]
		Such a set exists because $|R|\le\mu\le\lambda$; for a finite product of infinite or finite sets this follows directly from cardinal arithmetic. The two displayed families are disjoint. Add extra variables if necessary, and put $K=\Q(\mathcal U)$.
		
		For $a=(a_1,\ldots,a_n)\in R$, prescribe that the image of $T_a$ in $C_i$ belong to the residue class of $a_i$. For the distinguished $Z_j$, prescribe the balls
		\begin{equation}\label{prescribed-balls}
			Z_j\longmapsto
			\begin{cases}
				\pi_i+\pi_i^2C_i,&j=i,\\
				1+\pi_iC_i,&j\ne i.
			\end{cases}
		\end{equation}
		For an extra variable, prescribe $\pi_iC_i$. Each requirement is a ball of the form in Lemma~\ref{independent-lifts}. Apply that lemma separately for each $i$, after well-ordering $\mathcal U$ with order type $\lambda$. The chosen images of the variables are algebraically independent over $\Q$. They therefore define an embedding
		\[
		\sigma_i:K\hookrightarrow\Frac(C_i).
		\]
		The variables lift the target elements in each coefficient DVR, and the algebraic independence gives one common rational function field for the pulled-back valuations.
		
		Let $v_i$ be the normalized valuation of $C_i$ pulled back along $\sigma_i$, and let $V_i$ be its valuation ring in $K$. All variables have nonnegative value. The prescriptions \eqref{prescribed-balls} give
		\[
		v_i(Z_j)=\begin{cases}1,&i=j,\\0,&i\ne j,\end{cases}
		\]
		so every $v_i$ has value group exactly $\Z$. A valuation ring with value group $\Z$ is a DVR \cite[Section~10.50]{Stacks}. Thus each $V_i$ is a DVR. Its residue field is $k_i$: it embeds in the residue field of $C_i$, and the residues of the variables $T_a$ contain all elements of $k_i$.
		
		Now set $D=\bigcap_iV_i$. Every variable belongs to $D$, and the product residue map satisfies
		\[
		\rho(T_a)=a\qquad(a\in R).
		\]
		Thus it is surjective onto the specified product of fields. Lemma~\ref{finite-intersection}, with $z_i=Z_i$, proves that $D$ is a semilocal PID with exactly $n$ maximal ideals and that
		\[
		D/tD\cong R,\qquad \Jac(D)=tD,
		\qquad t=Z_1\cdots Z_n.
		\]
		The domain has characteristic zero since $D\subseteq K$. The element $t$ is nonzero and has value one at every place, so it is a nonunit. Finally,
		\[
		\Z[\mathcal U]\subseteq D\subseteq\Q(\mathcal U),
		\qquad \Frac(D)=K.
		\]
		Both outer rings have cardinality $\lambda$, because $\lambda$ is infinite. Hence $|D|=\lambda$, completing the proof.
	\end{proof}
	
	\subsection{Consequences and examples}
	
	\begin{corollary}\label{source-sizes}
		Assume the equivalent conditions of Theorem~\ref{mainB}. With $M$ and $\mu$ as there, the possible cardinalities of nonfield Dedekind domains mapping onto $R$ are exactly the cardinals in the interval
		\[
		\mu\le\lambda\le M.
		\]
		Every one of these cardinalities is attained by a characteristic-zero semilocal PID whose Jacobson radical is the quotient kernel.
	\end{corollary}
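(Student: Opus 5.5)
The plan is to prove two inclusions: every admissible cardinality lies in $[\mu,M]$, and every cardinal in that interval is attained. The second half is already done. The construction in the proof of Theorem~\ref{mainB} shows that, for each $\lambda$ with $\mu\le\lambda\le M$, there is a characteristic-zero semilocal PID $D$ with $|D|=\lambda$, $D/tD\cong R$ and $tD=\Jac(D)$. Such a $D$ is not a field, since $t$ is a nonzero nonunit. So every cardinal in the interval is realized by a source of the required form, with the Jacobson radical as kernel. What remains is to show that no other cardinalities occur.

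For the bounds, let $A$ be a Dedekind domain that is not a field, and let $\varphi:A\twoheadrightarrow R$ be a surjection. As in the paragraph following Lemma~\ref{cardinality}, the kernels $\mathfrak m_i$ of the composites $A\to k_i$ are maximal ideals. They are nonzero: a zero kernel would make $A\cong k_i$ a field. Lemma~\ref{cardinality} then gives $|A|\le\kappa_i^{\aleph_0}=M$ for any $i$.

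For the lower bound I will use two facts:
\begin{itemize}
\item Surjectivity gives $|A|\ge|R|\ge\kappa_j$ for every $j$.
\item $A$ is infinite. A finite domain is a field, and $A$ is not a field, so $|A|\ge\aleph_0$.
\end{itemize}
Together these give $|A|\ge\max\{\aleph_0,\kappa_1,\ldots,\kappa_n\}=\mu$, so $|A|\in[\mu,M]$. Under the hypotheses the interval is nonempty, because condition (4) gives $\kappa_j\le M$ and $M\ge2^{\aleph_0}\ge\aleph_0$.

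There is no serious obstacle. The only point needing care is the word ``nonfield'' in the statement. If $A$ is a field, then $n=1$ and $A\cong k_1$ may be finite or of cardinality $\kappa_1<\mu$. Excluding fields is exactly what ensures that the $\mathfrak m_i$ are nonzero, which makes Lemma~\ref{cardinality} applicable, and that $A$ is infinite.
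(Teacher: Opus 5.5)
Your proposal is correct and follows essentially the same route as the paper. The upper bound comes from Lemma~\ref{cardinality} applied to the nonzero coordinate kernels, the lower bound from surjectivity together with the infinitude of a nonfield domain, and attainment from the construction in Theorem~\ref{mainB}; your additional checks that the constructed $D$ is not a field and that the interval is nonempty are details the paper leaves implicit.
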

	
	\begin{proof}
		A nonfield integral domain is infinite. A surjection onto each $k_i$ gives the lower bound $|D|\ge\mu$, while Lemma~\ref{cardinality} applied to each coordinate kernel gives $|D|\le M$. The construction in Theorem~\ref{mainB} gives every cardinal in this interval.
	\end{proof}
	
	\begin{corollary}\label{smallfields}
		Every finite product of fields of cardinality at most $\mathfrak c=2^{\aleph_0}$ is a quotient of a characteristic-zero semilocal PID by its principal Jacobson radical. If one factor of $R$ is finite, this holds if and only if every factor has cardinality at most $\mathfrak c$.
	\end{corollary}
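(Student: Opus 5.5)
The corollary follows from Theorem~\ref{mainB} by checking its cardinal condition (3) in two cases. The key observation is that for every field $k$ one has $|k|^{\aleph_0}\ge 2^{\aleph_0}=\mathfrak c$. Moreover, if $|k|\le\mathfrak c$, then
\[
|k|^{\aleph_0}\le \mathfrak c^{\aleph_0}=2^{\aleph_0\cdot\aleph_0}=\mathfrak c .
\]
Hence $\kappa_i^{\aleph_0}=\mathfrak c$ exactly when $\kappa_i\le\mathfrak c$. For a finite field ($\kappa_i\ge 2$) this gives $\kappa_i^{\aleph_0}=\mathfrak c$ with no further computation.

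For the first assertion, suppose every $\kappa_i\le\mathfrak c$. Then $\kappa_1^{\aleph_0}=\cdots=\kappa_n^{\aleph_0}=\mathfrak c$, so condition (3) of Theorem~\ref{mainB} holds. Condition (2) then supplies a characteristic-zero semilocal PID $D$ and a nonzero nonunit $t$ with $D/tD\cong R$ and $tD=\Jac(D)$. This is exactly a quotient of $D$ by its principal Jacobson radical. One may also note that any $\lambda$ with $\mu\le\lambda\le\mathfrak c$ can be taken as $|D|$, for instance $\lambda=\mathfrak c$, though the corollary does not require this.

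For the equivalence, suppose some factor, say $k_1$, is finite. The direction ``if'' is the first assertion. For ``only if'', assume $R$ is a quotient of a characteristic-zero semilocal PID by its Jacobson radical. In particular $R$ is a quotient of a Dedekind domain, so condition (1) of Theorem~\ref{mainB} holds. Condition (4) then gives
\[
\kappa_j\le\kappa_1^{\aleph_0}=\mathfrak c\quad\text{for every } j .
\]
Equivalently, one can apply \eqref{necessary-bound} directly: a field source forces $n=1$ and $R=k_1$ finite, which is trivially fine; otherwise Lemma~\ref{cardinality} at the kernel of the finite coordinate gives $|A|\le\mathfrak c$, and every $\kappa_j\le|A|$.

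No step is a genuine obstacle. The only care needed is the cardinal identity $\mathfrak c^{\aleph_0}=\mathfrak c$ and the lower bound $\kappa^{\aleph_0}\ge\mathfrak c$ for $\kappa\ge2$. The degenerate case $n=1$ is already covered by Theorem~\ref{mainB}, so it needs no separate treatment.
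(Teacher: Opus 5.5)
Your proposal is correct and follows the paper's proof. The sandwich $\mathfrak c\le\kappa^{\aleph_0}\le\mathfrak c^{\aleph_0}=\mathfrak c$ verifies condition (3) of Theorem~\ref{mainB}; for the converse you invoke condition (4) where the paper uses the equality of powers in (3), and both give the same bound $\kappa_j\le\mathfrak c$.
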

	
	\begin{proof}
		For every cardinal $2\le\kappa\le\mathfrak c$,
		\[
		\mathfrak c=2^{\aleph_0}
		\le\kappa^{\aleph_0}
		\le\mathfrak c^{\aleph_0}=\mathfrak c.
		\]
		Thus the cardinal powers in Theorem~\ref{mainB} are equal. If one factor is finite, its countable power is $\mathfrak c$. Equality of all the powers then implies $\kappa_i\le\kappa_i^{\aleph_0}=\mathfrak c$ for every $i$. The converse is the first assertion.
	\end{proof}
	
	\begin{example}
		Both $\F_2\times\mathbb C$ and $\Q\times\F_3\times\mathbb C$ have characteristic-zero semilocal PID sources. No common coefficient field is required. More generally, fields of the same cardinality always satisfy Theorem~\ref{mainB}, even when that cardinality exceeds the continuum. The condition concerns their countable powers, so equality of the field cardinalities themselves is sufficient but is not required.
	\end{example}

	\begin{corollary}\label{negative-example}
		If $|S|>\mathfrak c$ and
		\[
		F=\F_2(X_s:s\in S),\qquad R=\F_2\times F,
		\]
		then $\gld R=0$, but $R$ is not a quotient of any Dedekind domain. It is not a quotient of any integral domain of global dimension at most one. Nevertheless, it is a quotient of a characteristic-zero B\'ezout domain of weak global dimension one by its principal Jacobson radical.
	\end{corollary}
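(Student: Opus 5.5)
The plan is to split the corollary into four claims and reduce each to earlier results. First, $R=\F_2\times F$ is a finite product of fields, so every module is a direct sum of simple modules $\F_2$ and $F$ (modules over a product decompose along the idempotents, and vector spaces are free). Hence every module is projective and $\gld R=0$.

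For the non-Dedekind claim, set $\kappa_1=2$ and $\kappa_2=|F|$. Since $S$ is infinite, $|F|=|S|>\mathfrak c$. Condition (4) of Theorem~\ref{mainB} fails, because $\kappa_2>\mathfrak c=2^{\aleph_0}=\kappa_1^{\aleph_0}$. By the equivalence (1)$\Leftrightarrow$(4), $R$ is not a quotient of a Dedekind domain. Equivalently, Corollary~\ref{smallfields} applies directly, since one factor is finite and the other exceeds $\mathfrak c$.

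The claim about global dimension at most one reduces to the Dedekind case. Let $A$ be an integral domain with $\gld A\le1$ and a surjection $A\to R$. If $\gld A=0$, then $A$ is semisimple; being a domain, it is a field, and its nonzero quotient would then be a field, which $R$ is not. If $\gld A=1$, every ideal of $A$ is projective, hence invertible, since a nonzero projective ideal of a domain is invertible. So $A$ is a hereditary domain, that is, a Dedekind domain. This contradicts the previous step. The point needing care is the standard equivalence ``hereditary domain $\Leftrightarrow$ Dedekind''. I would cite it from \cite{Rotman}, or argue briefly: projective ideals are locally free of rank one, hence invertible, and a domain all of whose nonzero ideals are invertible is Noetherian (invertible ideals are finitely generated) and Dedekind. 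This citation is the only nonformal ingredient.

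Finally, $R$ is von Neumann regular (Example~\ref{products}). Theorem~\ref{mainA} then supplies a characteristic-zero B\'ezout domain $D$ and $t\in D$ with $D/tD\cong R$, $\Jac(D)=tD$ and $\wgd D=1$, which is the last assertion.
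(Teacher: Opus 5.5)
Your proposal is correct and follows the paper's proof: condition (4) of Theorem~\ref{mainB} fails because $|F|=|S|>\mathfrak c=|\F_2|^{\aleph_0}$, the idempotent splitting gives $\gld R=0$, and Theorem~\ref{mainA} supplies the B\'ezout source. Your explicit reduction of the ``global dimension at most one'' clause to the Dedekind case (a field when $\gld A=0$, a hereditary and hence Dedekind domain when $\gld A=1$) fills in a step the paper leaves implicit; your aside via Corollary~\ref{smallfields} by itself only excludes sources as in condition (2) unless combined with (1)$\Leftrightarrow$(2), so the (1)$\Rightarrow$(4) route is the one doing the work.
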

	
	\begin{proof}
		Since every rational function uses finitely many indices, $|F|=|S|>\mathfrak c$. But $|\F_2|^{\aleph_0}=\mathfrak c$, so condition (4) of Theorem~\ref{mainB} fails. This excludes every Dedekind source, regardless of the kernel.
		
		The equality $\gld R=0$ is the standard semisimple-ring criterion \cite[Section~4.1]{Rotman}. Explicitly, every $R$-module splits into its two idempotent components, which are vector spaces over $\F_2$ and $F$. Choosing bases writes it as a direct sum of copies of the projective modules $(1,0)R$ and $(0,1)R$. Hence every $R$-module is projective and $\gld R=0$. Finally, $R$ is von Neumann regular, so Theorem~\ref{mainA} supplies the stated B\'ezout source.
	\end{proof}

\end{document}